\numberwithin{equation}{section}
\def\N{\mathbb N}
\def\R{\mathbb R}
\providecommand{\norm}[1]{\left\lVert#1\right\rVert}
\newcommand{\ip}[2]{\left\langle #1, #2 \right\rangle}
\providecommand{\abs}[1]{\left\lvert#1\right\rvert}
\DeclareMathOperator*{\Argmin}{argmin}
\DeclareMathOperator{\Fix}{\mathcal{F}}
\DeclareMathOperator{\AC}{\mathcal{A}}
\DeclareMathOperator{\Prox}{Prox}
\newcommand{\CAT}{\textup{CAT}}
\theoremstyle{plain}
\newtheorem{theorem}{Theorem}[section]
\newtheorem{lemma}[theorem]{Lemma}
\newtheorem{corollary}[theorem]{Corollary}
\newtheorem{example}[theorem]{Example}
\theoremstyle{definition}
\theoremstyle{remark}
\newtheorem{remark}[theorem]{Remark}
\title[Metrically nonspreading mappings in Hadamard spaces] 
{Fixed points of metrically nonspreading mappings in Hadamard spaces}
\author[F.~Kohsaka]{Fumiaki~Kohsaka}
\address[F.~Kohsaka]
{Department of Mathematical Sciences, Tokai University, 
Kitakaname, Hiratsuka, Kanagawa 259-1292, Japan}
\email{f-kohsaka@tsc.u-tokai.ac.jp}
\subjclass[2010]{Primary: 47H10, 47J05; Secondary: 52A41, 90C25}
\keywords{Fixed point, geodesic space, Hadamard space, metrically nonspreading mapping}
\begin{document}
\begin{abstract}
 We study the existence and approximation of fixed points of 
 metrically nonspreading mappings and 
 firmly metrically nonspreading mappings in Hadamard spaces. 
 The resolvents of monotone operators satisfying range conditions 
 are typical examples of firmly metrically nonspreading mappings. 
 Applications to monotone operators in such spaces are also included. 
\end{abstract}
\maketitle

\section{Introduction}
\label{sec:intro}

A number of nonlinear variational problems can be formulated as  
the problem of finding zero points of 
maximal monotone operators in Banach spaces. 
Among those problems are 
convex minimization problems~\cites{MR0193549, MR0262827}, 
variational inequality problems~\cite{MR0282272}, 
saddle point problems~\cite{MR0285942}, 
and equilibrium problems~\cite{MR2422998}. 

The class of nonspreading mappings first introduced by 
Kohsaka and Takahashi~\cite{MR2430800} 
is closely related to the problem of finding zero points of 
maximal monotone operators in Banach spaces. 
The authors of the papers~\cites{MR2430800, MR2448915, MR3289781} 
obtained some basic results on the fixed point problem 
for nonspreading mappings and applied them 
to maximal monotone operators in Banach spaces. 

Recall that a mapping $T$ of a nonempty 
subset $C$ of a smooth real Banach space $E$ 
into itself is said to be nonspreading if 
 \[
  \phi (Tx, Ty) + \phi (Ty, Tx) \leq \phi (Tx, y) + \phi (Ty, x)
 \]
for all $x,y\in C$, where $\phi$ is the two variable 
real function~\cites{MR1386667, MR1972223, MR1274188} 
on $E^2$ defined by 
\[
\phi (u, v) = \norm{u}^2 - 2\ip{u}{Jv} + \norm{v}^2 
\]
for all $u, v\in E$ and $J$ denotes the normalized duality mapping 
of $E$ into $E^*$. It is known~\cites{MR2430800, MR2448915} that 
if $E$ is a smooth, strictly convex, 
and reflexive Banach space, then the following hold. 
\begin{itemize}
 \item The generalized projection $\mathit{\Pi}_C$~\cites{MR1386667, MR1274188} 
of $E$ onto a nonempty closed convex subset $C$ of $E$ is 
nonspreading and $\Fix(\mathit{\Pi}_C)=C$; 
 \item the resolvent $Q_A$~\cites{MR2144037, MR2112848, MR2058504}  
of a maximal monotone operator 
$A\colon E\to 2^{E^*}$ defined by 
$Q_A=(J+A)^{-1}J$ is nonspreading and 
$\Fix(Q_A)=A^{-1}(0)$. 
\end{itemize}

We know the following results for nonspreading mappings in Banach spaces. 

\begin{theorem}[\cite{MR2430800}*{Theorem~4.1}] 
 \label{thm:fpt-nonsp-B}
 Let $C$ be a nonempty closed convex subset of a smooth, strictly
 convex, and reflexive real Banach space $E$ 
 and $T$ a nonspreading mapping of $C$ into itself. 
 Then $\Fix(T)$ is nonempty if and only if 
 $\{T^nx\}$ is bounded for some $x\in C$. 
\end{theorem}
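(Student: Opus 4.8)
We prove the two implications in turn. If $p\in\Fix(T)$, then $T^np=p$ for every $n\in\N$, so $\{T^np\}$ is a constant and hence bounded sequence; this settles the easy direction and reduces the theorem to showing that if $\{T^nx\}$ is bounded for some $x\in C$, then $T$ has a fixed point. So fix such an $x$ and a Banach limit $\mu$. The strategy is to use $\mu$ to compress the orbit $\{T^nx\}$ into a single candidate fixed point, imitating the Hilbert-space argument but with Alber's function $\phi$ in place of the squared distance.

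Define $g\colon C\to\R$ by $g(y)=\mu_n\phi(y,T^nx)$. Expanding, $g(y)=\norm{y}^2-2\ip{y}{w^*}+c$, where $w^*\in E^*$ is the bounded linear functional $y\mapsto\mu_n\ip{y}{JT^nx}$ and $c=\mu_n\norm{T^nx}^2$; hence $g$ is continuous, coercive (as $g(y)\geq(\norm{y}-\norm{w^*})^2+c-\norm{w^*}^2$) and, since $E$ is strictly convex, strictly convex. Reflexivity of $E$ together with closedness and convexity of $C$ then yields a unique minimizer $z$ of $g$ over $C$. Because $E$ is smooth, $\norm{\cdot}^2$ is G\^ateaux differentiable with derivative $2J$, so the minimality of $z$ is equivalent to the variational inequality $\ip{y-z}{w^*-Jz}\leq 0$ for all $y\in C$; writing $v=J^{-1}w^*$ (the inverse duality map being well-defined here) and using the identity $\phi(a,b)+\phi(b,d)=\phi(a,d)+2\ip{a-b}{Jd-Jb}$, valid for all $a,b,d\in E$, this is in turn equivalent to the Pythagorean-type inequality $\phi(y,z)+\phi(z,v)\leq\phi(y,v)$ for all $y\in C$; equivalently, $z=\mathit{\Pi}_Cv$. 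The goal is to prove $Tz=z$.

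Apply the nonspreading inequality to the pair $(T^nx,z)$: for every $n$,
\[
\phi(T^{n+1}x,Tz)+\phi(Tz,T^{n+1}x)\leq\phi(T^{n+1}x,z)+\phi(Tz,T^nx).
\]
Apply $\mu$ in $n$. Since a Banach limit is shift invariant and every sequence above is bounded, $\mu_n\phi(T^{n+1}x,Tz)=\mu_n\phi(T^nx,Tz)$, $\mu_n\phi(T^{n+1}x,z)=\mu_n\phi(T^nx,z)$, and $\mu_n\phi(Tz,T^{n+1}x)=\mu_n\phi(Tz,T^nx)$; cancelling the common term leaves
\[
\mu_n\phi(T^nx,Tz)\leq\mu_n\phi(T^nx,z).
\]
Letting $w\in E$ be the element determined by $\ip{w}{f}=\mu_n\ip{T^nx}{f}$ for all $f\in E^*$ (it exists by reflexivity), and recalling $c=\mu_n\norm{T^nx}^2$, the last inequality says exactly $\phi(w,Tz)\leq\phi(w,z)$. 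The intended finish is to combine $\phi(w,Tz)\leq\phi(w,z)$ with the minimality of $z$ for $g$ and with the variational/Pythagorean description of $z=\mathit{\Pi}_Cv$ — expanding each term through the cocycle identity for $\phi$ — so as to obtain $\phi(Tz,z)\leq 0$. Since $\phi\geq 0$ and $\phi(u,u')=0$ forces $u=u'$ in a smooth, strictly convex space, this would give $Tz=z$, so $\Fix(T)\neq\emptyset$.

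\textbf{The main obstacle.} In a Hilbert space $\phi(u,u')=\norm{u-u'}^2$ is symmetric, and then the Banach-limit computation above literally displays $Tz$ as a second minimizer of the \emph{convex} functional $y\mapsto\mu_n\norm{T^nx-y}^2$, so $Tz=z$ by uniqueness. In a general smooth, strictly convex, reflexive space $\phi$ is not symmetric, and the inequality squeezed out of nonspreadingness, $\phi(w,Tz)\leq\phi(w,z)$, constrains the \emph{second} argument of $\phi$ — for which $\phi$ is neither convex nor, a priori, weakly lower semicontinuous — whereas $z$ was produced from the \emph{first}-argument functional $g$, the one tied to the generalized projection $\mathit{\Pi}_C$; moreover the point $w=\mu_n T^nx$ appearing in the inequality is in general different from the point $v=J^{-1}(\mu_n JT^nx)$ of which $z$ is the generalized projection. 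Bridging these functionals and points is the heart of the matter, and it is exactly here that smoothness, strict convexity, and reflexivity — and with them the single-valuedness and variational characterization of $\mathit{\Pi}_C$ recalled in the introduction — must all be used in an essential way. I expect this step to require careful bookkeeping via the identity $\phi(a,b)+\phi(b,d)=\phi(a,d)+2\ip{a-b}{Jd-Jb}$, and possibly a re-selection of the auxiliary point so that the signs of the resulting terms line up.
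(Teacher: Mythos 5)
Your proposal is honest about its own incompleteness, and the obstacle you flag in the final paragraph is a genuine gap, not a matter of bookkeeping: as written, the argument does not prove the theorem. (The paper itself does not prove this statement — it quotes it from Kohsaka--Takahashi — so I judge the attempt on its own terms; the paper's Hadamard-space analogue, Theorem~\ref{thm:fpt}, is proved via the asymptotic center.) The difficulty is exactly as you diagnose it. After the Banach limit is applied, nonspreadingness yields control of the functional $h(y)=\mu_n\phi(T^nx,y)$, i.e.\ of $\phi$ in its \emph{second} argument: $h(Tz)\leq h(z)$ for every $z\in C$, equivalently $\phi(w,Tz)\leq\phi(w,z)$ where $\ip{w}{f}=\mu_n\ip{T^nx}{f}$. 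But your candidate $z$ minimizes the \emph{first}-argument functional $g(y)=\mu_n\phi(y,T^nx)$, whose minimizer is the generalized projection $\mathit{\Pi}_Cv$ with $Jv=w^*$; in general $z\neq w$ and $v\neq w$, and $\phi(w,Tz)\leq\phi(w,z)$ says nothing about $g(Tz)$ versus $g(z)$. No manipulation with the cocycle identity converts a second-argument inequality at the wrong base point into a first-argument one, because $\phi(w,\cdot)$ need not be convex or weakly lower semicontinuous, so $z$ carries no extremal property for it.

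The repair is to discard $g$ and its minimizer altogether and take the candidate fixed point to be $w$ itself. By reflexivity the bounded linear functional $f\mapsto\mu_n\ip{T^nx}{f}$ on $E^*$ is represented by some $w\in E$, and a Hahn--Banach separation argument shows $w\in\overline{\mathrm{co}}\{T^nx:n\in\N\}\subset C$, so $Tw$ is defined. Since $\mu_n\phi(T^nx,y)=\mu_n\norm{T^nx}^2-\norm{w}^2+\phi(w,y)$ for every $y\in C$, the inequality you derived, applied at $z=w$, reads $\phi(w,Tw)\leq\phi(w,w)=0$; as $\phi\geq 0$ and $\phi(u,u')=0$ forces $u=u'$ in a smooth, strictly convex space, this gives $Tw=w$. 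In other words, $w$ is automatically the unique minimizer over $C$ of the very functional $h$ that nonspreadingness decreases — the Banach-space counterpart of the role the asymptotic center plays in Theorem~\ref{thm:fpt}. With this substitution your outline closes; without it, it does not.
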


\begin{theorem}[\cite{MR2430800}*{Proposition~3.2}] 
 \label{thm:demi-nonsp-B}
 Let $C$ be a nonempty closed convex subset of a strictly convex 
 real Banach space $E$ with a uniformly G\^ateaux differentiable norm, 
 $T$ a nonspreading mapping of $C$ into itself, 
 and $u$ an element of $C$ such that 
 there exists a sequence $\{x_n\}$ in $C$ which is 
 weakly convergent to $u$ and satisfies 
 $\norm{x_n-Tx_n}\to 0$ as $n\to \infty$. 
 Then $u$ is an element of $\Fix(T)$. 
\end{theorem}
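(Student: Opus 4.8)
The plan is to test the nonspreading inequality at the pair $x=x_n$, $y=u$ and then let $n\to\infty$, using $x_n\rightharpoonup u$, the condition $\norm{x_n-Tx_n}\to 0$, and the regularity of the normalized duality mapping $J$. Before the main computation I would record three elementary facts about $\phi$. First, $\phi(a,b)\ge(\norm a-\norm b)^2\ge 0$, and the identity $\phi(a,b)=\norm a^2-\norm b^2-2\ip{a-b}{Jb}$ makes it clear that $\phi(a_n,b_n)\to 0$ whenever $\ck{a_n}$ and $\ck{b_n}$ are bounded and $\norm{a_n-b_n}\to 0$. Second, the three-point identity $\phi(a,b)=\phi(a,c)+\phi(c,b)+2\ip{a-c}{Jc-Jb}$ holds for all $a,b,c\in E$. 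Third, in a smooth and strictly convex space $\phi(a,b)=0$ forces $a=b$: from $0=\phi(a,b)\ge(\norm a-\norm b)^2$ one gets $\norm a=\norm b$ and $\ip a{Jb}=\norm a\,\norm b$, so when $b\neq 0$ both $a/\norm a$ and $b/\norm b$ are unit vectors at which the norm-one functional $Jb/\norm b$ attains its norm, and strict convexity rules this out unless $a=b$.

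Next I would note the immediate consequences of the hypotheses: $\ck{x_n}$ is bounded, hence so is $\ck{Tx_n}$; moreover $Tx_n\rightharpoonup u$, because $Tx_n-x_n\to 0$; and, since the norm of $E$ is uniformly G\^ateaux differentiable, $J$ is norm-to-weak${}^{*}$ uniformly continuous on bounded subsets of $E$, so that $JTx_n-Jx_n\to 0$ in the weak${}^{*}$ topology of $E^{*}$.

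The heart of the argument is the following chain. Applying the nonspreading inequality to $x=x_n$, $y=u$ gives
\[
\phi(Tx_n,Tu)+\phi(Tu,Tx_n)\le\phi(Tx_n,u)+\phi(Tu,x_n).
\]
I would expand the right-hand side with the three-point identity, routing $\phi(Tx_n,u)$ through $c=Tu$ and $\phi(Tu,x_n)$ through $c=Tx_n$; then the terms $\phi(Tx_n,Tu)$ and $\phi(Tu,Tx_n)$ cancel against the left-hand side, leaving
\[
0\le\phi(Tu,u)+2\ip{Tx_n-Tu}{JTu-Ju}+\phi(Tx_n,x_n)+2\ip{Tu-Tx_n}{JTx_n-Jx_n}.
\]
Letting $n\to\infty$: the second term tends to $2\ip{u-Tu}{JTu-Ju}$ since $Tx_n\rightharpoonup u$ and $JTu-Ju$ is fixed; the third tends to $0$ by the boundedness/closeness remark; and the fourth tends to $0$ once written as $\ip{Tu}{JTx_n-Jx_n}-\ip{Tx_n}{JTx_n-Jx_n}$, the first summand vanishing because $JTx_n-Jx_n\to 0$ weak${}^{*}$ and the second because $\ip{Tx_n}{JTx_n-Jx_n}=\norm{Tx_n}^2-\ip{Tx_n}{Jx_n}\to 0$. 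Hence $0\le\phi(Tu,u)+2\ip{u-Tu}{JTu-Ju}$, and a direct expansion shows the right-hand side equals $-\phi(u,Tu)$. Therefore $\phi(u,Tu)\le 0$, so $\phi(u,Tu)=0$, and by the third preliminary fact $u=Tu$.

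The step I expect to be the main obstacle is controlling the mixed term $\ip{Tu-Tx_n}{JTx_n-Jx_n}$: it pairs a merely bounded sequence with a sequence that is only weak${}^{*}$-null, so it cannot be dismissed by continuity of $J$ alone. The remedy is to peel off the fixed vector $Tu$ and to treat the remaining pairing $\ip{Tx_n}{JTx_n-Jx_n}$ by the algebraic identity above. It is also worth stressing that uniform G\^ateaux differentiability of the norm --- not mere smoothness --- is exactly what yields $JTx_n-Jx_n\to 0$ in the weak${}^{*}$ topology, since $\ck{x_n}$ itself need not converge in norm.
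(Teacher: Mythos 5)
Your argument is correct and complete: the three preliminary facts about $\phi$ are all valid, the expansion of the nonspreading inequality via the identity $\phi(a,b)=\phi(a,c)+\phi(c,b)+2\ip{a-c}{Jc-Jb}$ is the right decomposition, and you correctly isolate the only delicate limit, $\ip{Tu-Tx_n}{JTx_n-Jx_n}\to 0$, which genuinely requires the uniform norm-to-weak${}^{*}$ continuity of $J$ on bounded sets coming from uniform G\^ateaux differentiability. Note that the paper itself does not prove this statement but quotes it from Kohsaka--Takahashi; your proof is essentially the argument of that cited source, so there is nothing in-paper to contrast it with (the paper's Hadamard-space analogue, Theorem~\ref{thm:demiclosed}, instead runs through asymptotic centers and avoids the duality mapping entirely).
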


\begin{theorem}[\cite{MR2430800}*{Theorem~4.6}] 
 \label{thm:fpt-commute-nonsp-B}
 Let $C$ be a nonempty bounded closed convex subset of a smooth, strictly
 convex, and reflexive real Banach space $E$ 
 and $\{T_{\alpha}\}_{\alpha\in A}$ 
 a commutative family of nonspreading mappings of $C$ into itself. 
 Then $\bigcap_{\alpha \in A}\Fix(T_{\alpha})$ is nonempty. 
\end{theorem}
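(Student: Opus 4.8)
The plan is to reduce the (possibly infinite) common fixed point problem to a finite intersection statement using the weak compactness of $C$, and to settle the finite case by induction on the number of mappings, each inductive step being a single application of Theorem~\ref{thm:fpt-nonsp-B}.

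The auxiliary fact I would establish first is that, for every nonspreading self-mapping $T$ of a nonempty bounded closed convex subset $D$ of $E$, the set $\Fix(T)$ is nonempty, closed, and convex. Nonemptiness is immediate from Theorem~\ref{thm:fpt-nonsp-B}, since the boundedness of $D$ forces every orbit $\ck{T^nx}$ to be bounded. Putting $y=p\in\Fix(T)$ in the nonspreading inequality and cancelling the common term $\phi(Tx,p)$ gives the quasi-nonexpansiveness-type estimate
\[
 \phi(p,Tx)\le\phi(p,x)\qquad (x\in D,\ p\in\Fix(T)).
\]
Closedness of $\Fix(T)$ then follows from the continuity of $x\mapsto\phi(x,w)$ for fixed $w$, together with $\phi\ge 0$ and the equivalence $\phi(u,v)=0\iff u=v$ (here strict convexity of $E$ enters): if $x_n\in\Fix(T)$ and $x_n\to x$, then $\phi(x,Tx)=\lim_n\phi(x_n,Tx)\le\lim_n\phi(x_n,x)=\phi(x,x)=0$, so $Tx=x$. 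Convexity follows by combining the displayed estimate with the three-point identity $\phi(x,z)=\phi(x,y)+\phi(y,z)+2\ip{x-y}{Jy-Jz}$, used with $y=Tz$: for $z=(1-t)p+tq$ with $p,q\in\Fix(T)$ and $0<t<1$, forming the convex combination of the two resulting inequalities and simplifying via $\phi(a,b)+\phi(b,a)=2\ip{a-b}{Ja-Jb}$ forces $\phi(z,Tz)\le 0$, hence $Tz=z$.

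Granting the auxiliary fact, the finite intersection property of $\ck{\Fix(T_\alpha)}_{\alpha\in A}$ is proved by induction, the base case being that fact. Suppose $D:=\bigcap_{i=1}^{n}\Fix(T_{\alpha_i})$ is nonempty; then it is also bounded, closed, and convex, being an intersection of such sets inside $C$. By commutativity, $T_{\alpha_{n+1}}$ maps $D$ into itself, since $T_{\alpha_i}(T_{\alpha_{n+1}}x)=T_{\alpha_{n+1}}(T_{\alpha_i}x)=T_{\alpha_{n+1}}x$ for $x\in D$ and $i\le n$; and the restriction $T_{\alpha_{n+1}}|_D$ is again nonspreading, the defining inequality involving only pairs of points of the domain. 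Applying Theorem~\ref{thm:fpt-nonsp-B} to $T_{\alpha_{n+1}}|_D$ on the bounded closed convex set $D$ shows that $\Fix\sk{T_{\alpha_{n+1}}|_D}=\bigcap_{i=1}^{n+1}\Fix(T_{\alpha_i})$ is nonempty. Finally, since $E$ is reflexive, $C$ is weakly compact, and each $\Fix(T_\alpha)$, being closed and convex, is weakly closed; the finite intersection property then yields a point in $\bigcap_{\alpha\in A}\Fix(T_\alpha)$.

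I expect the auxiliary fact to be where the real work lies — in particular, extracting the quasi-nonexpansiveness estimate from the nonspreading condition and exploiting the geometry of $\phi$ in smooth, strictly convex, reflexive spaces to obtain both closedness and convexity of $\Fix(T)$. Once that lemma is available, the commutativity induction and the weak-compactness argument are essentially bookkeeping; note that a demiclosedness principle in the spirit of Theorem~\ref{thm:demi-nonsp-B} would deliver closedness but not convexity, for which the three-point identity argument still seems necessary.
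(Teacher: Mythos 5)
Your proof is correct and follows essentially the same route as the source the paper cites for this theorem, and as the paper's own proof of its Hadamard-space analogue (Theorem~\ref{prob:thm-finite-commut-nonsp}): establish that each $\Fix(T_\alpha)$ is nonempty, closed, and convex, obtain the finite intersection property by the commutativity induction with Theorem~\ref{thm:fpt-nonsp-B} applied to the restricted mapping, and conclude by weak compactness of $C$. The auxiliary closed-convexity lemma, including the quasi-$\phi$-nonexpansiveness estimate and the three-point-identity argument for convexity, is exactly the standard ingredient used there, so no further comment is needed.
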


In particular, 
we know the following result on the asymptotic behavior of 
the Mann iteration~\cite{MR0054846} for nonspreading mappings in Banach spaces. 

\begin{theorem}[\cite{MR3289781}*{Theorem~4.1}]
 \label{thm:Mann-nonsp-B}
 Let $C$ be a nonempty closed convex subset of a smooth, strictly convex,
 and reflexive real Banach space $E$, 
 $J$ the normalized duality mapping of $E$ into $E^*$, 
 $\mathit{\Pi}_C$ the generalized projection of $E$ onto $C$, 
 $T$ a nonspreading 
 mapping of $C$ into itself, $\{\alpha_n\}$ a sequence in $(0,1]$ such that 
 $\sum_{n=1}^{\infty} \alpha_n = \infty$, and 
 both $\{x_n\}$ and $\{z_n\}$ sequences in $C$ defined by 
 $x_1\in C$ and 
 \begin{align*}
  x_{n+1} &= \mathit{\Pi}_C J^{-1} \bigl((1-\alpha_n) Jx_n 
 + \alpha_n JTx_n \bigr); \\
  z_n &= \frac{1}{\sum_{l=1}^{n}\alpha_l} 
  \sum_{k=1}^{n} \alpha_k Tx_k
 \end{align*}
 for all $n\in \N$. 
 Then the following are equivalent. 
 \begin{enumerate}
  \item[(i)] $\Fix(T)$ is nonempty; 
  \item[(ii)] $\{x_n\}$ is bounded; 
  \item[(iii)] $\{z_n\}$ is bounded; 
  \item[(iv)] $\{z_n\}$ has a bounded subsequence. 
 \end{enumerate}
 In this case, each subsequential weak limit of $\{z_n\}$ 
 belongs to $\Fix(T)$. 
\end{theorem}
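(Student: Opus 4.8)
The plan is to establish the four conditions as a cycle $(\mathrm{i})\Rightarrow(\mathrm{ii})\Rightarrow(\mathrm{iii})\Rightarrow(\mathrm{iv})\Rightarrow(\mathrm{i})$, with the last implication also delivering the concluding assertion, and to rest everything on the calculus of $\phi$, the geometry of the generalized projection, and the nonspreading inequality. First I would record: $\phi(a,b)\ge(\norm{a}-\norm{b})^{2}\ge 0$, with $\phi(a,b)=0$ only when $a=b$ (here strict convexity and smoothness of $E$ enter); $J$ is a norm-preserving bijection of $E$ onto $E^{*}$ and $\norm{\cdot}^{2}$ is convex on $E^{*}$; $\phi(y,\mathit{\Pi}_C x)\le\phi(y,x)$ for every $y\in C$; and the key consequence of the nonspreading inequality, namely $\phi(p,Tx)\le\phi(p,x)$ for all $x\in C$ whenever $p\in\Fix(T)$ --- this is the case $y=p$ of the defining inequality after cancelling $\phi(Tx,p)$ from both sides. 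Writing $w_n=J^{-1}\bigl((1-\alpha_n)Jx_n+\alpha_n JTx_n\bigr)$, so that $x_{n+1}=\mathit{\Pi}_C w_n$ and $Jw_n=(1-\alpha_n)Jx_n+\alpha_n JTx_n$, the projection inequality together with $\norm{w_n}^{2}\le(1-\alpha_n)\norm{x_n}^{2}+\alpha_n\norm{Tx_n}^{2}$ gives the basic recursion
\[
 \phi(y,x_{n+1})\le(1-\alpha_n)\,\phi(y,x_n)+\alpha_n\,\phi(y,Tx_n)
 \qquad(y\in C,\ n\in\N),
\]
which, summed over $n$, yields $\sum_{k=1}^{n}\alpha_k\bigl(\phi(y,x_k)-\phi(y,Tx_k)\bigr)\le\phi(y,x_1)$ for every $y\in C$.

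For $(\mathrm{i})\Rightarrow(\mathrm{ii})$ I take $p\in\Fix(T)$ and put $y=p$ in the basic recursion; with $\phi(p,Tx_n)\le\phi(p,x_n)$ this makes $\{\phi(p,x_n)\}$ nonincreasing, hence bounded, and then $(\norm{p}-\norm{x_n})^{2}\le\phi(p,x_n)$ forces $\{x_n\}$ bounded. For $(\mathrm{i})\Rightarrow(\mathrm{iii})$ the same monotonicity gives $\phi(p,Tx_k)\le\phi(p,x_k)\le\phi(p,x_1)$, so $\norm{Tx_k}\le\norm{p}+\phi(p,x_1)^{1/2}$ for all $k$; as each $z_n$ is a convex combination of $Tx_1,\dots,Tx_n$, the sequence $\{z_n\}$ is bounded, and $(\mathrm{iii})\Rightarrow(\mathrm{iv})$ is immediate. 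The implication $(\mathrm{ii})\Rightarrow(\mathrm{iii})$ is the first one that needs genuine work: from $\{x_n\}$ bounded one wants $\{z_n\}$ bounded, and here I would combine $\norm{z_n}^{2}\le\frac{1}{s_n}\sum_{k=1}^{n}\alpha_k\norm{Tx_k}^{2}$ (convexity of $\norm{\cdot}^{2}$, with $s_n=\sum_{l=1}^{n}\alpha_l$) with the summed recursion and the generalized-projection estimates, using $\sum_n\alpha_n=\infty$ to keep these weighted averages bounded.

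The heart of the proof is $(\mathrm{iv})\Rightarrow(\mathrm{i})$, arranged to give the last sentence at the same time. Let $\{z_{n_j}\}$ be a bounded subsequence; by reflexivity, after passing to a further subsequence, $z_{n_j}\rightharpoonup u$, and $u\in C$ since $C$ is convex and closed, hence weakly closed. To show $u\in\Fix(T)$, fix $y\in C$, average the nonspreading inequality for the pairs $(x_k,y)$ with weights $\alpha_k/s_n$, and use convexity of $\phi(\cdot,Ty)$ to move $z_n$ into the first slot:
\[
 \phi(z_n,Ty)\le\frac{1}{s_n}\sum_{k=1}^{n}\alpha_k\,\phi(Tx_k,y)
 +\frac{1}{s_n}\sum_{k=1}^{n}\alpha_k\,\phi(Ty,x_k).
\]
Expanding the two sums exhibits them through $\ip{z_n}{Jy}$, $\norm{y}^{2}$, $\norm{Ty}^{2}$, and the weighted averages $\frac1{s_n}\sum\alpha_k\norm{Tx_k}^{2}$, $\frac1{s_n}\sum\alpha_k\norm{x_k}^{2}$, $\frac1{s_n}\sum\alpha_k Jx_k$; the summed basic recursion ties the first two averages together and lets the awkward $\norm{\cdot}^{2}$-terms cancel up to a remainder that one shows vanishes as $n\to\infty$. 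Letting $n=n_j\to\infty$, using that $\phi(\cdot,v)$ is convex and norm-lower-semicontinuous, hence weakly lower semicontinuous, for each fixed $v$, while $\ip{z_{n_j}}{Jv}\to\ip{u}{Jv}$, I expect to reach $\phi(u,Ty)\le\phi(u,y)$ for every $y\in C$; taking $y=u$ gives $\phi(u,Tu)\le\phi(u,u)=0$, so $u=Tu$. This proves $(\mathrm{iv})\Rightarrow(\mathrm{i})$ and, since under any of the conditions $\{z_n\}$ is bounded by $(\mathrm{iii})$, shows that every weak subsequential limit of $\{z_n\}$ lies in $\Fix(T)$.

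The main obstacle is twofold, and it sits in the two nonroutine implications. A nonspreading mapping need not carry bounded sets to bounded sets, so there is no cheap control of $\{Tx_n\}$ from $\{x_n\}$; the boundedness of the ergodic averages in $(\mathrm{ii})\Rightarrow(\mathrm{iii})$, and likewise the taming of the weighted averages of $\norm{Tx_k}^{2}$, $\norm{x_k}^{2}$ and $Jx_k$ in $(\mathrm{iv})\Rightarrow(\mathrm{i})$, must be extracted from the summed recursion and the divergence of $\sum_n\alpha_n$. And in the weak limit the duality map $J$ is not weakly continuous, so one must keep every $J$ evaluated at a fixed vector and keep $z_{n_j}$ only in the first argument of $\phi$, where lower semicontinuity is available. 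Getting the bookkeeping of the weighted averages right and making these semicontinuity passages precise is where the real work lies.
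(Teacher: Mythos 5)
This theorem is quoted from \cite{MR3289781} and not proved in the present paper, so the only in-paper point of comparison is its Hadamard-space analogue (Lemmas~\ref{lem:equiv-nonsp} and~\ref{lem:bdd_on_bdd} and Theorem~\ref{thm:Mann}). Your preliminaries, the basic recursion $\phi(y,x_{n+1})\le(1-\alpha_n)\phi(y,x_n)+\alpha_n\phi(y,Tx_n)$, its summed form, and the implications (i)$\Rightarrow$(ii), (i)$\Rightarrow$(iii), (iii)$\Rightarrow$(iv) are all fine. The genuine gap is in (ii)$\Rightarrow$(iii), and it starts from a false premise: you assert that a nonspreading mapping need not carry bounded sets to bounded sets, when in fact it must --- this is \cite{MR3289781}*{Lemma~3.2}, whose metric analogue is exactly Lemma~\ref{lem:bdd_on_bdd} of this paper --- and that lemma is what makes (ii)$\Rightarrow$(iii) immediate ($\{Tx_n\}$ is then bounded and each $z_n$ is a convex combination of the $Tx_k$). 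It follows from the nonspreading inequality alone: fixing $y$ and expanding, the left side contains $\norm{Tx}^2$ while every other term grows at most linearly in $\norm{Tx}$ as $x$ ranges over a bounded set, so $\norm{Tx}$ cannot blow up. Your proposed substitute --- extracting an upper bound on $\frac{1}{s_n}\sum_{k}\alpha_k\norm{Tx_k}^2$ from the summed recursion --- cannot work: the recursion only bounds $\phi(y,Tx_k)$, hence $\norm{Tx_k}^2$, from \emph{below}, never from above, so it is consistent with $\norm{Tx_k}\to\infty$.

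A second problem sits in (iv)$\Rightarrow$(i). Your target inequality $\phi(u,Ty)\le\phi(u,y)$ for all $y\in C$ is the right one, but the route to it --- convexity of $\phi(\cdot,Ty)$ followed by cancellation of the weighted averages of $\norm{Tx_k}^2$, $\norm{x_k}^2$ and $Jx_k$ --- is left unverified at precisely its crucial step, and the convexity inequality $\phi(z_n,Ty)\le\frac{1}{s_n}\sum_k\alpha_k\phi(Tx_k,Ty)$ already discards information (it reduces to $\norm{z_n}^2\le\frac{1}{s_n}\sum_k\alpha_k\norm{Tx_k}^2$, the wrong direction for the cancellation you need). The argument that actually closes, and which is the Banach-space form of Lemma~\ref{lem:equiv-nonsp}, rewrites nonspreadingness via the three-point identity as
\[
 0\le \phi(Ty,y)+2\ip{Tx-Ty}{JTy-Jy}+\phi(Ty,x)-\phi(Ty,Tx).
\]
Averaging over $x=x_k$ with weights $\alpha_k/s_n$ makes the dependence on the $Tx_k$ \emph{linear}, namely $2\ip{z_n-Ty}{JTy-Jy}$, while your summed recursion applied at the point $Ty\in C$ gives $\frac{1}{s_n}\sum_k\alpha_k\bigl(\phi(Ty,x_k)-\phi(Ty,Tx_k)\bigr)\le\phi(Ty,x_1)/s_n\to0$. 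Passing to the weak limit along $z_{n_j}\rightharpoonup u$ and using the three-point identity once more yields $\phi(u,Ty)\le\phi(u,y)$, and $y=u$ gives $\phi(u,Tu)\le0$, so $Tu=u$; note this needs no boundedness of $\{Tx_k\}$ and no lower semicontinuity argument. Without this linearizing identity (and without the bounded-on-bounded-sets lemma), neither of the two nonroutine implications in your outline goes through.
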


In the special case where $E$ is a real Hilbert space, 
a mapping $T$ of a nonempty subset $C$ of $E$ into itself is nonspreading if 
\begin{align}\label{eq:nonsp-Hilbert}
 2\norm{Tx-Ty}^2 \leq \norm{Tx-y}^2 + \norm{Ty-x}^2
\end{align}
for all $x,y\in C$. This condition is satisfied 
whenever $T$ is firmly nonexpansive, i.e., 
\begin{align}\label{eq:fn-Hilbert}
 \norm{Tx-Ty}^2 \leq \ip{Tx-Ty}{x-y}
\end{align}
for all $x,y\in C$. 

Motivated by~\eqref{eq:nonsp-Hilbert} and~\eqref{eq:fn-Hilbert},  
we say that a mapping $T$ of a metric space $(X, d)$ 
into itself is 
\begin{itemize}
 \item metrically nonspreading if 
 \[
  2d(Tx,Ty)^2 \leq d(Tx,y)^2 + d(Ty,x)^2 
 \]
 for all $x,y\in X$; 
 \item firmly metrically nonspreading if 
 \[
  2d(Tx,Ty)^2 + d(Tx,x)^2 +d(Ty,y)^2 \leq d(Tx,y)^2 + d(Ty,x)^2 
 \]
 for all $x,y\in X$. 
\end{itemize}

Every firmly metrically nonspreading 
mapping is obviously metrically nonspreading. 
We can also see that every metrically nonspreading mapping 
with a fixed point is quasinonexpansive 
and that every firmly metrically nonspreading mapping $T$ 
of $X$ into itself with a fixed point satisfies 
\begin{align}\label{eq:fmn-quasi}
 d(u, Tx)^2 + d(Tx, x)^2 \leq d(u, x)^2
\end{align}
for all $u\in \Fix(T)$ and $x\in X$. 
If $X$ is a nonempty subset of a real Hilbert space, then 
\[
 \ip{x-y}{z-w} = \frac{1}{2}
 \bigl(\norm{x-w}^2 + \norm{y-z}^2 - \norm{x-z}^2 - \norm{y-w}^2\bigr)
\] 
for all $x,y,z,w\in X$, where the left hand side is the inner product on the space. 
In this case, 
$T$ is firmly metrically nonspreading 
if and only if it is firmly nonexpansive. 

Metrically nonspreading mappings and firmly metrically nonspreading 
mappings are also called $1/2$-nonexpansive mappings 
and firmly nonexpansive mappings by 
Naraghirad, Wong, and Yao~\cite{MR3037927}*{Definition~4.6} 
and Khatibzadeh and Ranjbar~\cite{MR3679017}*{Definition~3.5}, respectively. 
The notion of $\alpha$-nonexpansive mapping 
was first introduced by Aoyama and Kohsaka~\cite{MR2810735}*{Definition~2.2} 
in the context of Banach spaces. 

As we see in Sections~\ref{sec:example-mn} and~\ref{sec:app}, 
the metric projections onto nonempty closed convex sets, 
the proximity mappings of proper lower semicontinuous convex functions, 
and the resolvents of monotone operators satisfying range conditions  
in Hadamard spaces are firmly metrically nonspreading. 
Thus the fixed point problem for such mappings 
is closely related to convex analysis in Hadamard spaces. 

The aim of this paper is to study the existence and approximation of 
fixed points of metrically nonspreading mappings and 
firmly metrically nonspreading mappings in Hadamard spaces. 
In particular, we obtain analogues of 
Theorems~\ref{thm:fpt-nonsp-B},~\ref{thm:demi-nonsp-B},~\ref{thm:fpt-commute-nonsp-B}, and~\ref{thm:Mann-nonsp-B} 
for metrically nonspreading mappings in Hadamard spaces. 
We finally apply our results to monotone operators in Hadamard spaces. 

\section{Preliminaries}
\label{sec:pre}

Throughout this paper, we denote by $\N$ and $\R$ the sets 
of all positive integers and real numbers, respectively. 
The two dimensional Euclidean space and its norm 
are denoted by $\R^2$ and $\abs{\,\cdot\,}_{\R^2}$, 
respectively.  
Unless otherwise specified, 
we denote by $X$ a metric space with a metric $d$. 
The set of all fixed points of a mapping $T$ of $X$ into itself 
is denoted by $\Fix(T)$. 
A mapping $T$ of $X$ into itself is said to be
\begin{itemize}
 \item asymptotically regular if $\lim_{n} d(T^{n+1}x, T^{n}x) = 0$ 
 for all $x\in X$; 
 \item nonexpansive if $d(Tx, Ty)\leq d(x, y)$ for all $x,y\in X$; 
 \item quasinonexpansive if $\Fix(T)$ is nonempty 
and $d(u, Tx)\leq d(u, x)$ for all $u\in \Fix(T)$ and $x\in X$. 
\end{itemize}

The product space $X\times X$ 
and its element $(x,y)$ are denoted 
by $X^2$ and $\overrightarrow{xy}$, respectively. 
The quasilinearization 
$\ip{\cdot}{\cdot}$ on $X^2$ introduced by 
Berg and Nikolaev~\cite{MR2390077} 
is a real function on $X^2\times X^2$ defined by 
\begin{align}\label{eq:quasilinearization}
 \ip{\overrightarrow{xy}}{\overrightarrow{zw}} 
 = \frac{1}{2} \left(
 d(x,w)^2 + d(y, z)^2 - d(x,z)^2 - d(y, w)^2 
 \right)
\end{align}
for all $\overrightarrow{xy}, \overrightarrow{zw}\in X^2$. 
If $X$ is particularly a real Hilbert space, then 
\[
  \ip{\overrightarrow{xy}}{\overrightarrow{zw}} 
 = \ip{x-y}{z-w}
\]
for all $x,y,z,w\in X$. 
It is clear that 
\begin{itemize}
 \item $\ip{\overrightarrow{xy}}{\overrightarrow{xy}}=d(x,y)^2$; 
 \item $\ip{\overrightarrow{xy}}{\overrightarrow{zw}}
 =\ip{\overrightarrow{zw}}{\overrightarrow{xy}} 
 =-\ip{\overrightarrow{yx}}{\overrightarrow{zw}}$;
 \item $\ip{\overrightarrow{xp}}{\overrightarrow{zw}}
+ \ip{\overrightarrow{py}}{\overrightarrow{zw}}
 =\ip{\overrightarrow{xy}}{\overrightarrow{zw}}$; 
 \item $d(x,y)^2 = d(x,z)^2 + d(z,y)^2 
 + 2\ip{\overrightarrow{xz}}{\overrightarrow{zy}}$
\end{itemize}
for all $x,y,z,w,p\in X$. 

A metric space $X$ is said to be uniquely geodesic 
if for each $x,y\in X$, there exists 
a unique mapping $\gamma$ of $[0,l]$ into $X$ 
such that $\gamma(0)=x$, $\gamma(l)=y$, 
and 
\[
 d\bigl(\gamma(s), \gamma(t)\bigr) = \abs{s-t}
\]
for all $s, t \in [0,l]$, where $l=d(x,y)$. 
The mapping $\gamma$ is called a geodesic from $x$ to $y$ 
and the point $\gamma(\alpha l)$ is denoted 
by $(1-\alpha)x \oplus \alpha y$ for all $\alpha \in [0,1]$. 
A metric space $X$ is said to be a $\CAT(0)$ space 
if it is uniquely geodesic and the following 
$\CAT(0)$ inequality 
\[
 d\bigl((1-\alpha) x \oplus \alpha y, 
 (1-\beta) x \oplus \beta z \bigr) 
 \leq \abs{(1-\alpha) \bar{x} + \alpha \bar{y} 
-\bigl((1-\beta) \bar{x} + \beta \bar{z} \bigr)}_{\R^2}
\]
holds whenever $x,y,z\in X$, $\bar{x}, \bar{y}, \bar{z}\in \R^2$, 
\[
 d(x,y) = \abs{\bar{x}-\bar{y}}_{\R^2}, \quad 
 d(y,z) = \abs{\bar{y}-\bar{z}}_{\R^2}, \quad 
 d(z,x) = \abs{\bar{z}-\bar{x}}_{\R^2}, 
\]
and $\alpha, \beta\in [0,1]$. 
It follows from~\cite{MR2390077}*{Corollary~3} 
and~\cite{MR3241330}*{Theorem~1.3.3~(v)} that 
a uniquely geodesic metric space $X$ is a $\CAT(0)$ space if and only if 
the following Cauchy--Schwarz inequality 
\begin{align}\label{eq:CS-ineq}
 \abs{\ip{\overrightarrow{xy}}{\overrightarrow{zw}}} 
 \leq d(x,y)d(z,w)
\end{align}
holds for all $\overrightarrow{xy}, \overrightarrow{zw}\in X^2$. 

It is obvious that if $X$ is a $\CAT(0)$ space, then 
\begin{itemize}
 \item $d(z,  (1-\alpha) x \oplus \alpha y) 
 \leq (1-\alpha) d(z, x) + \alpha d(z, y)$; 
 \item $d(z, (1-\alpha) x \oplus \alpha y)^2 
 \leq (1-\alpha) d(z, x)^2 + \alpha d(z, y)^2 
 -\alpha (1-\alpha) d(x,y)^2$
\end{itemize} 
for all $x,y,z\in X$ and $\alpha \in [0,1]$.  
A complete $\CAT(0)$ space is called an Hadamard space. 
Among typical examples of Hadamard spaces are 
nonempty closed convex subsets of real Hilbert spaces, 
open unit balls of complex Hilbert spaces with hyperbolic metrics, 
and simply connected complete Riemannian manifolds  
with nonpositive sectional curvature; 
see~\cites{MR3241330, MR1744486, MR1835418} 
on geodesic metric spaces and $\CAT(0)$ spaces for more details.  

It is well known that if $X$ is a $\CAT(0)$ space 
and $T$ is a quasinonexpansive mapping of $X$ into itself, 
then $\Fix(T)$ is closed and convex. 
Hence the fixed point set of every metrically nonspreading
mapping with a fixed point is closed and convex. 

The concept of $\Delta$-convergence, first introduced by 
Lim~\cite{MR423139} and later applied to the study of 
$\CAT(0)$ spaces by Kirk and Panyanak~\cite{MR2416076}, 
is a generalization of 
weak convergence in the context of Hilbert spaces 
to metric spaces. 
The asymptotic center $\AC\bigl(\{x_n\}\bigr)$ 
of a sequence $\{x_n\}$ in a metric space $X$ is defined by 
\[
 \AC\bigl(\{x_n\}\bigr) 
 = \left\{u\in X: 
 \limsup_{n} d(u, x_n) = \inf_{y\in X} 
 \limsup_{n} d(y, x_n)\right\}, 
\]
which coincides with the whole space $X$ if $\{x_n\}$ is unbounded. 
The sequence $\{x_n\}$ is said to be $\Delta$-convergent to 
$p\in X$ if 
\[
 \AC\bigl(\{x_{n_i}\}\bigr) = \{p\}
\]
for each subsequence $\{x_{n_i}\}$ of $\{x_n\}$, 
in which case $p$ is said to be the $\Delta$-limit of $\{x_n\}$. 
We denote by $\omega_{\Delta}\bigl(\{x_n\}\bigr)$ 
the set of all subsequential $\Delta$-limits 
of $\{x_n\}$. It is obvious that if $\{x_n\}$ is $\Delta$-convergent 
to $p$, then $\{x_n\}$ is bounded and 
$\omega_{\Delta}\bigl(\{x_n\}\bigr)=\{p\}$. 

The following lemmas are of fundamental importance. 

\begin{lemma}[\cite{MR2232680}*{Proposition~7}; 
see also~\cite{MR3241330}*{Section~3.1}]\label{lem:AC}
 The asymptotic center of every bounded sequence in an Hadamard space 
 is a singleton. 
\end{lemma}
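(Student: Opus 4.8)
The plan is to study the function $r\colon X\to[0,\infty)$ given by $r(y)=\limsup_n d(y,x_n)$ together with the number $r=\inf_{y\in X}r(y)$, and to prove that the set $\{u\in X: r(u)=r\}$, which is exactly $\AC(\{x_n\})$, is a singleton. First I would record that $r$ is $1$-Lipschitz: since $\abs{d(y,x_n)-d(z,x_n)}\le d(y,z)$ for every $n$, passing to $\limsup_n$ gives $\abs{r(y)-r(z)}\le d(y,z)$. Boundedness of $\{x_n\}$ makes $r$ finite everywhere, so $r<\infty$, and $1$-Lipschitzness in particular makes $r$ continuous.

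For existence of a center, I would take a minimizing sequence $\{y_k\}$ with $r(y_k)\to r$ and show it is Cauchy. For $j,k\in\N$ set $m_{jk}=\tfrac12 y_j\oplus\tfrac12 y_k$. Applying the quadratic $\CAT(0)$ inequality from the preliminaries with $\alpha=\tfrac12$,
\[
 d(x_n,m_{jk})^2\le\tfrac12 d(x_n,y_j)^2+\tfrac12 d(x_n,y_k)^2-\tfrac14 d(y_j,y_k)^2
\]
for each $n$; taking $\limsup_n$ and using $r(m_{jk})\ge r$ gives
\[
 \tfrac14 d(y_j,y_k)^2\le\tfrac12 r(y_j)^2+\tfrac12 r(y_k)^2-r^2\longrightarrow 0
\]
as $j,k\to\infty$. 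Hence $\{y_k\}$ is Cauchy, and since $X$ is complete it converges to some $u\in X$; continuity of $r$ yields $r(u)=r$, so $u\in\AC(\{x_n\})$.

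For uniqueness, suppose $u,v\in\AC(\{x_n\})$, i.e.\ $r(u)=r(v)=r$. Repeating the previous estimate with the midpoint $m=\tfrac12 u\oplus\tfrac12 v$ in place of $m_{jk}$ gives $r(m)^2\le r^2-\tfrac14 d(u,v)^2$, while $r(m)\ge r$ forces $r(m)^2\ge r^2$; comparing the two yields $d(u,v)=0$, so $u=v$. Therefore $\AC(\{x_n\})=\{u\}$.

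The only nontrivial ingredient — and where I would expect any subtlety to lie — is the quadratic $\CAT(0)$ convexity inequality $d(z,(1-\alpha)x\oplus\alpha y)^2\le(1-\alpha)d(z,x)^2+\alpha d(z,y)^2-\alpha(1-\alpha)d(x,y)^2$; it is precisely what makes the midpoint of two almost-optimal points strictly better unless they coincide, and it fails in general geodesic spaces. Everything else is the Lipschitz continuity of $r$ and the completeness of the Hadamard space, both already available.
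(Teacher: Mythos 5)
Your argument is correct and is exactly the standard proof of this fact: the paper itself does not prove Lemma~\ref{lem:AC} but cites it to Dhompongsa--Kirk--Sims and to Ba\v{c}\'ak's book, where the same scheme (the $1$-Lipschitz asymptotic radius function, the quadratic $\CAT(0)$ inequality at midpoints to make a minimizing sequence Cauchy, and the same midpoint estimate for uniqueness) is used. The only step worth a word in a written version is the interchange $\limsup_n d(\cdot,x_n)^2=\bigl(\limsup_n d(\cdot,x_n)\bigr)^2$, which holds for bounded nonnegative sequences (cf.\ Lemma~\ref{lem:limsup}).
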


\begin{lemma}[\cite{MR2416076}*{Section~3}; 
see also~\cite{MR3241330}*{Proposition~3.1.2}]\label{lem:Delta-conv}
 Every bounded sequence in an Hadamard space 
 has a $\Delta$-convergent subsequence. 
\end{lemma}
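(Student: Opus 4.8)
The plan is to follow the classical argument of Kirk and Panyanak, reducing everything to Lemma~\ref{lem:AC} together with the second of the two $\CAT(0)$ inequalities recorded above (the one bounding the distance from a point to a midpoint). Fix a bounded sequence $\{x_n\}$ in the Hadamard space $X$, and for a subsequence $S=\{x_{n_k}\}$ write $r(S)=\inf_{y\in X}\limsup_{k}d(y,x_{n_k})$ for its asymptotic radius. Since passing to a further subsequence can only decrease a $\limsup$, the quantity $r$ is nonincreasing along subsequences, and it is a nonnegative real number because $\{x_n\}$ is bounded.

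First I would extract, by a diagonal argument, a subsequence whose asymptotic radius is least in a hereditary sense. Put $\{u^{0}_n\}=\{x_n\}$; given $\{u^{j-1}_n\}$, set $\rho_j=\inf\{r(S):S\text{ a subsequence of }\{u^{j-1}_n\}\}$ and choose a subsequence $\{u^{j}_n\}$ of $\{u^{j-1}_n\}$ with $r(\{u^{j}_n\})<\rho_j+1/j$. Since the subsequences of $\{u^{j}_n\}$ form a subfamily of those of $\{u^{j-1}_n\}$, we have $\rho_j\le\rho_{j+1}\le r(\{u^{j}_n\})<\rho_j+1/j$, so $\{\rho_j\}$ is nondecreasing and bounded, hence convergent to some $\rho\ge 0$. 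A diagonal subsequence $\{u_n\}$ of the $\{u^{j}_n\}$ is eventually a subsequence of each $\{u^{j}_n\}$, whence $\rho_{j+1}\le r(\{u_n\})\le r(\{u^{j}_n\})<\rho_j+1/j$ for all $j$, forcing $r(\{u_n\})=\rho$; the very same two-sided estimate applies to any subsequence $S'$ of $\{u_n\}$, so $r(S')=\rho$ as well. By Lemma~\ref{lem:AC} the asymptotic center of $\{u_n\}$ is a single point $p$, and then $\limsup_n d(p,u_n)=r(\{u_n\})=\rho$.

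It then remains to show that $\{u_n\}$ is $\Delta$-convergent to $p$, i.e., that $\AC(S')=\{p\}$ for every subsequence $S'=\{u_{n_k}\}$ of $\{u_n\}$. Let $q$ be the asymptotic center of such an $S'$, which is again a single point by Lemma~\ref{lem:AC}; then $\limsup_k d(q,u_{n_k})=r(S')=\rho$, while $\limsup_k d(p,u_{n_k})\le\limsup_n d(p,u_n)=\rho$. From the $\CAT(0)$ inequality
\[
 d\bigl(u_{n_k},\tfrac12 p\oplus\tfrac12 q\bigr)^{2}
 \le\tfrac12 d(u_{n_k},p)^{2}+\tfrac12 d(u_{n_k},q)^{2}-\tfrac14 d(p,q)^{2},
\]
together with elementary properties of the limit superior, we obtain $\limsup_k d\bigl(u_{n_k},\tfrac12 p\oplus\tfrac12 q\bigr)^{2}\le\rho^{2}-\tfrac14 d(p,q)^{2}$. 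But the left-hand side is at least $r(S')^{2}=\rho^{2}$, since no point of $X$ beats the asymptotic radius of $S'$; hence $d(p,q)=0$, i.e., $q=p$. Therefore $\{u_n\}$ is $\Delta$-convergent, and $\{x_n\}$ has a $\Delta$-convergent subsequence. I expect the only genuinely delicate point to be the diagonal construction in the second step, namely ensuring that the minimality of the asymptotic radius is inherited by every subsequence of $\{u_n\}$; after that, the argument is just Lemma~\ref{lem:AC} and convexity of the metric.
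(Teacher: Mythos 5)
Your proof is correct and is essentially the standard Kirk--Panyanak argument that the paper points to via its citations (the hereditarily minimal ``regular'' subsequence obtained by the Goebel--Kirk diagonal construction, followed by uniqueness of the asymptotic center from Lemma~\ref{lem:AC} and the midpoint convexity inequality); the paper itself gives no proof, only the references. The one delicate point you flag --- that the diagonal sequence is eventually a subsequence of each $\{u^{j}_n\}$ and that the asymptotic radius is insensitive to dropping finitely many terms --- is handled adequately, so nothing is missing.
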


\begin{lemma}[\cite{MR3574140}*{Lemma~2.6}; 
see also~\cite{MR3213144}*{Proposition~3.1}]\label{lem:KK-Delta}
 If $\{x_n\}$ is a bounded sequence 
 in an Hadamard space such that $\{d(z, x_n)\}$ is convergent 
 for each $z$ in $\omega_{\Delta}\bigl(\{x_n\}\bigr)$, 
 then $\{x_n\}$ is $\Delta$-convergent. 
\end{lemma}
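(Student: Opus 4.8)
The plan is to show first that $\omega_{\Delta}\bigl(\{x_n\}\bigr)$ reduces to a single point $p$, and then to upgrade this to genuine $\Delta$-convergence by invoking the convergence hypothesis a second time. Since $\{x_n\}$ is bounded, Lemma~\ref{lem:Delta-conv} guarantees that $\omega_{\Delta}\bigl(\{x_n\}\bigr)$ is nonempty, so I would fix $p$ in this set, realized as the $\Delta$-limit of some subsequence $\{x_{n_k}\}$ (hence $\AC\bigl(\{x_{n_k}\}\bigr)=\{p\}$), and set $r=\lim_n d(p, x_n)$, which exists by hypothesis.

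For the singleton claim, suppose $q\in\omega_{\Delta}\bigl(\{x_n\}\bigr)$ with $q\neq p$, say $\{x_{m_l}\}$ is $\Delta$-convergent to $q$, and put $s=\lim_n d(q, x_n)$. Because $\AC\bigl(\{x_{n_k}\}\bigr)=\{p\}$ is a singleton by Lemma~\ref{lem:AC}, the defining minimality property of the asymptotic center forces $\limsup_k d(p, x_{n_k})<\limsup_k d(q, x_{n_k})$; since both $\{d(p,x_n)\}$ and $\{d(q,x_n)\}$ are convergent, the two sides are $r$ and $s$ respectively, so $r<s$. Exchanging the roles of $p$ and $q$ and using $\AC\bigl(\{x_{m_l}\}\bigr)=\{q\}$ gives $s<r$, a contradiction. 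Hence $\omega_{\Delta}\bigl(\{x_n\}\bigr)=\{p\}$.

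It remains to prove that $\{x_n\}$ is $\Delta$-convergent to $p$, i.e. that every subsequence $\{x_{n_i}\}$ has asymptotic center $\{p\}$. Such a subsequence is bounded, so by Lemma~\ref{lem:AC} its asymptotic center is a singleton $\{q\}$, and by minimality $\limsup_i d(q, x_{n_i})\leq\limsup_i d(p, x_{n_i})=r$. By Lemma~\ref{lem:Delta-conv}, $\{x_{n_i}\}$ has a further subsequence $\{x_{n_{i_j}}\}$ that is $\Delta$-convergent to some point of $\omega_{\Delta}\bigl(\{x_n\}\bigr)=\{p\}$, so $\AC\bigl(\{x_{n_{i_j}}\}\bigr)=\{p\}$. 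If $q\neq p$, minimality applied to this sub-subsequence yields $r=\limsup_j d(p, x_{n_{i_j}})<\limsup_j d(q, x_{n_{i_j}})\leq\limsup_i d(q, x_{n_i})\leq r$, a contradiction; thus $q=p$, and $\{x_n\}$ is $\Delta$-convergent.

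I expect the last paragraph to be the main obstacle: a singleton $\omega_{\Delta}\bigl(\{x_n\}\bigr)$ does \emph{not} by itself imply $\Delta$-convergence, because the asymptotic center of an arbitrary subsequence need not belong to $\omega_{\Delta}\bigl(\{x_n\}\bigr)$. The convergence hypothesis is exactly what pins the quantity $\limsup d(p,\cdot)$ along every subsequence to the common value $r$, which is what rules out a stray asymptotic center and makes the argument go through.
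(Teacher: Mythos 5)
Your proof is correct; the paper itself does not prove this lemma but cites it from \cite{MR3574140} and \cite{MR3213144}, and your argument is essentially the standard one used there: extract a $\Delta$-convergent sub-subsequence, use uniqueness of asymptotic centers (Lemma~\ref{lem:AC}) together with the hypothesis that $\{d(z,x_n)\}$ converges for $z\in\omega_{\Delta}\bigl(\{x_n\}\bigr)$ to pin every subsequence's asymptotic center to the same point. You also correctly identify the one genuinely delicate point, namely that a singleton $\omega_{\Delta}\bigl(\{x_n\}\bigr)$ alone does not suffice and the convergence hypothesis is what excludes a stray asymptotic center of an arbitrary subsequence.
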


A subset $C$ of a $\CAT(0)$ space $X$ is said to be 
convex if $(1-\alpha) x \oplus \alpha y \in C$ whenever 
$x,y\in C$ and $\alpha \in [0,1]$. 
A function $f$ of $X$ into $(-\infty, \infty]$ is said to be 
proper if $f(p)$ is finite for some $p \in X$. 
It is also said to be convex if 
\[
 f((1-\alpha) x \oplus \alpha y) \leq (1-\alpha) f(x) + \alpha f(y)
\]
whenever $x,y\in X$ and $\alpha \in (0,1)$. 
The set of all minimizers of a function $f$ of $X$ into $(-\infty, \infty]$ 
is denoted by $\Argmin_X f$ or $\Argmin_{y\in X} f(y)$. 
If $\Argmin_X f$ is a singleton $\{p\}$ for some $p\in X$, 
we sometimes identify $\Argmin_X f$ with $p$.   

We know the following minimization theorem in Hadamard spaces. 

\begin{theorem}[\cite{MR3574140}*{Theorem~4.1}]
\label{thm:unique_minimizer}
 Let $\{z_n\}$ be a bounded sequence in 
 an Hadamard space $X$, 
 $\{\beta_n\}$ a sequence of positive real numbers such that 
 $\sum_{n=1}^{\infty}\beta_n=\infty$, 
 and $g$ the real function on $X$ defined by 
 \begin{align*}
  g(y) = \limsup_{n} \frac{1}{\sum_{l=1}^{n}\beta_l} 
 \sum_{k=1}^{n} \beta_k d(y, z_k)^2
 \end{align*}
 for all $y\in X$. 
 Then $g$ is a continuous and convex 
 function such that $\Argmin_X g$ is a singleton. 
\end{theorem}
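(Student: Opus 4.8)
The plan is to establish, in turn, that $g$ is a well-defined real-valued function, that it is strongly convex (hence convex), that it is continuous, and finally that it possesses a unique minimizer.

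First I would record that $g$ takes values in $[0,\infty)$. Since $\{z_n\}$ is bounded, fix $x_0\in X$ and set $R=\sup_n d(x_0,z_n)<\infty$ and $S_n=\sum_{l=1}^n\beta_l$. For every $y\in X$ and $n\in\N$ the average $A_n(y):=\frac{1}{S_n}\sum_{k=1}^n\beta_k d(y,z_k)^2$ satisfies $0\le A_n(y)\le (d(y,x_0)+R)^2$, so $g(y)=\limsup_n A_n(y)$ is a well-defined nonnegative real number. For convexity, I would use the listed $\CAT(0)$ property
\[
 d\bigl(z,(1-\alpha)x\oplus\alpha y\bigr)^2
 \le (1-\alpha)d(z,x)^2+\alpha d(z,y)^2-\alpha(1-\alpha)d(x,y)^2 .
\]
For each fixed $n$, $A_n$ is a convex combination of the functions $d(\cdot,z_k)^2$ with the positive weights $\beta_k/S_n$ summing to $1$, so $A_n$ inherits the same inequality; taking $\limsup_n$ and using that $\limsup$ is subadditive while the term $-\alpha(1-\alpha)d(x,y)^2$ does not depend on $n$, I obtain
\[
 g\bigl((1-\alpha)x\oplus\alpha y\bigr)
 \le (1-\alpha)g(x)+\alpha g(y)-\alpha(1-\alpha)d(x,y)^2
\]
for all $x,y\in X$ and $\alpha\in[0,1]$. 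In particular $g$ is convex.

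Next I would prove continuity by a direct Lipschitz estimate. From $\abs{d(y,z_k)^2-d(y',z_k)^2}\le d(y,y')\bigl(d(y,z_k)+d(y',z_k)\bigr)$ and the fact that $d(y,z_k),d(y',z_k)\le\rho+R$ whenever $y,y'$ lie in the closed ball of radius $\rho$ about $x_0$, one gets $\abs{A_n(y)-A_n(y')}\le 2(\rho+R)d(y,y')$ for every $n$, and hence $\abs{g(y)-g(y')}\le 2(\rho+R)d(y,y')$. Thus $g$ is Lipschitz on every bounded set and therefore continuous on $X$.

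Finally, for the minimizer I would combine the strong convexity with completeness of $X$. Let $c=\inf_X g\in[0,\infty)$ and choose a minimizing sequence $\{y_n\}$ with $g(y_n)\to c$. Applying the displayed strong convexity inequality with $\alpha=1/2$ to the midpoint $\frac12 y_m\oplus\frac12 y_n$ yields
\[
 c\le \tfrac12 g(y_m)+\tfrac12 g(y_n)-\tfrac14 d(y_m,y_n)^2 ,
\]
so $d(y_m,y_n)^2\le 2\bigl(g(y_m)+g(y_n)\bigr)-4c\to0$ as $m,n\to\infty$; hence $\{y_n\}$ is Cauchy, and since $X$ is complete it converges to some $p\in X$ with $g(p)=c$ by continuity, i.e.\ $p\in\Argmin_X g$. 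If $q$ were a second minimizer, the same inequality applied to $\frac12 p\oplus\frac12 q$ would give $c\le c-\frac14 d(p,q)^2$, forcing $p=q$; hence $\Argmin_X g$ is a singleton. I do not anticipate any real obstacle: the only point demanding slight care is the passage to $\limsup$ in the convexity inequality (one must use that $\limsup$ is subadditive and that the negative correction term is constant in $n$), and it is worth noting that the hypothesis $\sum_n\beta_n=\infty$ is not actually needed for this statement, each $A_n$ being already a genuine convex combination.
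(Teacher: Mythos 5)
Your proof is correct, and it is essentially the standard argument: the paper itself imports this statement from the cited reference without reproducing a proof, and the proof there proceeds exactly as you do, by showing that $g$ is a locally Lipschitz, uniformly (strongly) convex function obtained as a $\limsup$ of convex combinations of the functions $d(\cdot,z_k)^2$, and then extracting a Cauchy minimizing sequence via the strong convexity inequality at midpoints. Your side remark that $\sum_n\beta_n=\infty$ is not needed for this particular statement is also accurate; that hypothesis is used in the applications (e.g.\ Theorem~\ref{thm:Mann}), not in the minimization theorem itself.
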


We also know the following lemmas. 

\begin{lemma}[\cite{MR3213164}*{Lemma~11}]\label{lem:double-sequence}
Let $A$ be a bounded function 
of $\N \times \N$ into $[0,\infty)$ such that 
$A(n,n)=0$ for all $n\in \N$ and 
\[
 2A(n+1,m+1) \leq A(n+1,m) + A(n,m+1)
\]
for all $n, m \in \N$. 
Then $\lim_{n} A(n,n+1) = 0$.  
\end{lemma}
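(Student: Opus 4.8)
The plan is to unfold $A$ along diagonals and reduce the claim to an elementary fact about convex sequences. For each integer $k\geq 0$ and each $n\in\N$, put $G_k(n)=A(n,n+k)$; then $0\leq G_k(n)\leq M$ for all such $k,n$, where $M=\sup A<\infty$, and $G_0\equiv 0$ since $A(n,n)=0$. Substituting $m=n+k$ into the assumed inequality $2A(n+1,m+1)\leq A(n+1,m)+A(n,m+1)$ gives, for every $k\geq 1$ and $n\in\N$,
\[
 2G_k(n+1)\leq G_{k-1}(n+1)+G_{k+1}(n).
\]

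The next step is to pass to the limit superior in $n$. Writing $\alpha_k=\limsup_{n}G_k(n)=\limsup_{n}A(n,n+k)\in[0,M]$ and using that a shift in the index does not affect $\limsup_{n}$ together with the subadditivity $\limsup_{n}(a_n+b_n)\leq\limsup_{n}a_n+\limsup_{n}b_n$, the displayed inequality yields $2\alpha_k\leq\alpha_{k-1}+\alpha_{k+1}$ for every $k\geq 1$; that is, $\{\alpha_k\}$ is a convex sequence. It is also bounded (by $M$) and satisfies $\alpha_0=0$. Finally I would invoke the elementary fact that a convex sequence which is bounded above is nonincreasing: its forward differences $\alpha_{k+1}-\alpha_k$ are nondecreasing, so if $\alpha_1-\alpha_0$ were positive we would get $\alpha_k\geq k(\alpha_1-\alpha_0)\to\infty$, contradicting boundedness. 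Hence $\alpha_1\leq\alpha_0=0$, and since $\alpha_1\geq 0$ we obtain $\limsup_{n}A(n,n+1)=\alpha_1=0$; as $A\geq 0$, this forces $\lim_{n}A(n,n+1)=0$.

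The index bookkeeping in the substitution and the $\limsup$ estimate are routine. The one thing to be careful about is what \emph{not} to do: attempting to bound $A(n,n+1)$ by iterating the hypothesis down to the vanishing diagonal drives one through terms $A(n,n+j)$ of ever-increasing gap $j$ and never terminates. The point of the argument is to handle all gaps at once, so that the single structural consequence — convexity of $\{\alpha_k\}$ in $k$, anchored by $\alpha_0=0$ and controlled by boundedness — does all the work; this is the conceptual crux, and once it is in place the remainder is immediate.
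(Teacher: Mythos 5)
Your argument is correct: the reduction to the diagonal sequences $G_k(n)=A(n,n+k)$, the passage to $\alpha_k=\limsup_n A(n,n+k)$ via shift-invariance and subadditivity of $\limsup$, and the conclusion from convexity, boundedness, and $\alpha_0=0$ all check out (the index bookkeeping for $k\geq 1$ is right, and all quantities lie in $[0,M]$ so no $\infty-\infty$ issues arise). The paper does not prove this lemma itself but cites Suzuki's Lemma~11, whose proof proceeds by essentially the same diagonal-$\limsup$/convexity device, so your proposal is a faithful self-contained reconstruction of the intended argument.
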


\begin{lemma}[\cite{MR3777000}*{Lemma~2.5}]\label{lem:limsup}
 Let $I$ be a nonempty closed subset of $\R$, 
 $\{t_n\}$ a bounded sequence in $I$, 
 and $f$ a nondecreasing and continuous real function on $I$. 
 Then 
 \[
  f\left(\limsup_n t_n\right)= \limsup_n f(t_n).  
 \]
\end{lemma}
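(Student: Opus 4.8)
The plan is to establish the two inequalities $f\sk{\limsup_n t_n} \leq \limsup_n f(t_n)$ and $\limsup_n f(t_n) \leq f\sk{\limsup_n t_n}$ separately, after first observing that $t := \limsup_n t_n$ lies in $I$, so that $f(t)$ is meaningful. This preliminary point follows because $\{t_n\}$ is bounded, hence admits a subsequence $\{t_{n_i}\}$ with $t_{n_i}\to t$, and $I$ is closed.

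For the first inequality I would simply invoke the continuity of $f$ along that subsequence: $f(t) = \lim_i f(t_{n_i}) \leq \limsup_n f(t_n)$, since $\{f(t_{n_i})\}$ is a subsequence of $\{f(t_n)\}$.

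For the reverse inequality I would pass to a subsequence $\{t_{m_j}\}$ along which $f(t_{m_j}) \to \limsup_n f(t_n)$; as $\{t_{m_j}\}$ is still bounded, a further subsequence $\{t_{m_{j_k}}\}$ converges to some point $s$, and $s\in I$ again by closedness of $I$. Because $s$ is a subsequential limit of $\{t_n\}$, we have $s \leq t$, and since $f$ is nondecreasing this yields $f(s)\leq f(t)$. On the other hand, continuity of $f$ gives $f(s) = \lim_k f(t_{m_{j_k}}) = \limsup_n f(t_n)$. Combining the two gives $\limsup_n f(t_n) = f(s) \leq f(t) = f\sk{\limsup_n t_n}$, and together with the first inequality this proves equality.

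There is no substantial obstacle here; the only steps needing a little care are the memberships $t\in I$ and $s\in I$, which rely on the closedness of $I$ together with the boundedness of $\{t_n\}$, and the bookkeeping of the nested subsequences. It is worth noting that continuity of $f$ is used in both directions, whereas the monotonicity of $f$ enters only in the second inequality.
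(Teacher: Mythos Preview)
Your argument is correct and complete. Note, however, that the paper itself does not prove this lemma: it is simply quoted from \cite{MR3777000}*{Lemma~2.5} and invoked later in the proof of Lemma~\ref{lem:fpt}, so there is no in-paper proof to compare against. Your approach is the standard one: closedness of $I$ guarantees that both $t=\limsup_n t_n$ and the auxiliary subsequential limit $s$ lie in the domain of $f$, continuity of $f$ is used along convergent subsequences to obtain each inequality, and monotonicity is applied exactly once, to pass from $s\leq t$ to $f(s)\leq f(t)$. The only implicit step you might make explicit is that $\{f(t_n)\}$ is bounded (so that $\limsup_n f(t_n)$ is finite), which follows because the closure of $\{t_n\}$ is a compact subset of $I$ on which the continuous function $f$ is bounded; but your argument in fact recovers this a posteriori, since you show $\limsup_n f(t_n)=f(s)\in\R$.
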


\section{Examples of metrically nonspreading mappings}
\label{sec:example-mn}

In this section, we discuss some examples of metrically nonspreading 
mappings and firmly metrically nonspreading mappings in Hadamard spaces. 

Using~\eqref{eq:quasilinearization} and~\eqref{eq:CS-ineq}, 
we readily obtain the following. 
We note that~\eqref{eq:fn-metric} is equivalent to~\eqref{eq:fn-Hilbert} 
when $X$ is a nonempty subset of a real Hilbert space. 
\begin{lemma}\label{lem:fund-fmn-mn}
Let $X$ be a metric space and $T$ a mapping of $X$ into itself. 
Then $T$ is firmly metrically nonspreading if and only if 
\begin{align}\label{eq:fn-metric}
 d(Tx,Ty)^2 \leq \ip{\overrightarrow{(Tx)(Ty)}}{\overrightarrow{xy}}
\end{align}
for all $x,y\in X$. 
If $X$ is a $\CAT(0)$ space and $T$ is firmly metrically nonspreading, 
then $T$ is nonexpansive. 
\end{lemma}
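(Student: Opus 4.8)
The plan is to prove both assertions of Lemma~\ref{lem:fund-fmn-mn} directly from the quasilinearization identity~\eqref{eq:quasilinearization}. First I would observe that, by the very definition of $\ip{\cdot}{\cdot}$,
\[
 \ip{\overrightarrow{(Tx)(Ty)}}{\overrightarrow{xy}}
 = \frac{1}{2}\bigl(d(Tx,y)^2 + d(Ty,x)^2 - d(Tx,x)^2 - d(Ty,y)^2\bigr),
\]
so the defining inequality of firmly metrically nonspreading mappings,
\[
 2d(Tx,Ty)^2 + d(Tx,x)^2 + d(Ty,y)^2 \leq d(Tx,y)^2 + d(Ty,x)^2,
\]
is literally just $2d(Tx,Ty)^2 \leq 2\ip{\overrightarrow{(Tx)(Ty)}}{\overrightarrow{xy}}$ after moving the two squared terms to the other side and dividing by $2$. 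Thus the equivalence with~\eqref{eq:fn-metric} is an immediate algebraic rewriting; there is no real obstacle here, only bookkeeping with the sign conventions listed after~\eqref{eq:quasilinearization}.

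For the second assertion, suppose $X$ is a $\CAT(0)$ space and $T$ is firmly metrically nonspreading, so~\eqref{eq:fn-metric} holds. I would then apply the Cauchy--Schwarz inequality~\eqref{eq:CS-ineq}, which is available precisely because $X$ is $\CAT(0)$, to the right-hand side of~\eqref{eq:fn-metric}:
\[
 d(Tx,Ty)^2 \leq \ip{\overrightarrow{(Tx)(Ty)}}{\overrightarrow{xy}}
 \leq \bigl|\ip{\overrightarrow{(Tx)(Ty)}}{\overrightarrow{xy}}\bigr|
 \leq d(Tx,Ty)\, d(x,y).
\]
If $d(Tx,Ty)=0$ the inequality $d(Tx,Ty)\leq d(x,y)$ is trivial; otherwise I divide both sides by $d(Tx,Ty)>0$ to conclude $d(Tx,Ty)\leq d(x,y)$, i.e.\ $T$ is nonexpansive.

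Neither step is genuinely hard; the only point requiring care is that the Cauchy--Schwarz inequality~\eqref{eq:CS-ineq} is a characterization of $\CAT(0)$ spaces among uniquely geodesic spaces, so it is legitimate to invoke it here, whereas the first part of the lemma uses nothing beyond the formal identity~\eqref{eq:quasilinearization} and holds in an arbitrary metric space. I would write the proof in essentially this order: rewrite the inner product, deduce the equivalence, then feed~\eqref{eq:fn-metric} into~\eqref{eq:CS-ineq} and cancel.
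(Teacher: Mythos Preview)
Your proposal is correct and follows exactly the approach the paper indicates: the paper does not write out a proof but simply states that the lemma is obtained ``using~\eqref{eq:quasilinearization} and~\eqref{eq:CS-ineq}'', which is precisely the rewriting via the quasilinearization identity followed by the Cauchy--Schwarz estimate that you carry out.
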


The metric projections and the proximity mappings in Hadamard spaces 
are two particularly important examples 
of firmly metrically nonspreading mappings. 

Let $X$ be an Hadamard space. 
If $C$ is a nonempty closed convex subset of $X$, 
then the metric projection $P_C$ of $X$ onto $C$ given by 
\[
 P_C(x) = \Argmin_{y\in C} d(y, x)
\]
for all $x\in X$ is a well-defined nonexpansive mapping 
of $X$ onto $C$ and $\Fix(P_C)=C$; 
see~\cites{MR1744486, MR3241330} for more details.  
More generally, if $f$ is a proper lower semicontinuous convex 
function of $X$ into $(-\infty, \infty]$, then 
the proximity mapping $\Prox_f$ of $f$ given by 
\begin{align}\label{eq:prox}
  \Prox_f (x) = \Argmin_{y\in X} \left\{f(y) + \frac{1}{2}d(y, x)^2\right\}
\end{align}
for all $x\in X$ is a well-defined nonexpansive mapping 
of $X$ into itself and $\Fix(\Prox_f)=\Argmin_X f$;   
see~\cites{MR3241330, MR1360608, MR1651416}. 
It is also known~\cite{MR3206460}*{Proposition~3.3} that 
$\Prox_f$ is firmly nonexpansive, i.e., 
\begin{align*}
 d(\Prox_f x, \Prox_f y) 
 \leq 
 d\bigl(\alpha x \oplus (1-\alpha) \Prox_f x, 
 \alpha y \oplus (1-\alpha) \Prox_f y \bigr)
\end{align*}
for all $\alpha \in [0,1]$ and $x,y\in X$. 
The proximity mappings in Hadamard spaces were originally studied by 
Jost~\cite{MR1360608} and Mayer~\cite{MR1651416}; 
see also~\cites{MR3047087, MR3691338, MR3396547, MR3574140} on 
minimization algorithms based on the proximity mappings in Hadamard spaces. 
It follows from~\cite{MR3574140}*{Corollary~3.2} 
and Lemma~\ref{lem:mono-res} that the following holds. 

\begin{example}\label{expl:proj-prox}
 If $X$ is an Hadamard space, then the following hold. 
 \begin{enumerate}
  \item[(i)] The metric projection $P_C$ of $X$ onto a nonempty 
 closed convex subset $C$ is firmly metrically nonspreading; 
  \item[(ii)] the proximity mapping $\Prox_f$ of a proper 
 lower semicontinuous convex function $f$ 
 of $X$ into $(-\infty, \infty]$ is firmly metrically nonspreading. 
 \end{enumerate}
\end{example}

Motivated by~\cite{MR2810735}*{Example~2.4}, we show the following result. 
\begin{example}\label{expl:mn}
 Let $X$ be a metric space, both $S$ and $T$ metrically nonspreading mappings 
 such that $S(X)$ and $T(X)$ are contained by a closed ball $\overline{B}_{r}(a)$ 
 for some $a\in X$ and $r>0$, $\delta$ a positive real number satisfying 
 $\delta\geq \left(1+2\sqrt{2}\right)r$, 
 and 
 $U$ the mapping of $X$ into itself defined by 
 \begin{align*}
  Ux=
  \begin{cases}
  Sx & (x\in \overline{B}_{\delta}(a)); \\
  Tx & (\textrm{otherwise}). 
  \end{cases}
 \end{align*}
 Then $U$ is metrically nonspreading. 
\end{example}

\begin{proof}
 Let $x,y\in X$ be given. 
 If either $x,y\in \overline{B}_{\delta}(a)$ or 
 $x,y\in X\setminus \overline{B}_{\delta}(a)$, then we have 
 \[
  2d(Ux,Uy)^2 \leq d(Ux, y)^2 + d(Uy, x)^2 
 \]
 since both $S$ and $T$ are metrically nonspreading. 
 If $x\in \overline{B}_{\delta}(a)$ 
 and $y\in X\setminus \overline{B}_{\delta}(a)$, 
 then we have 
 \begin{align*}
  &d(Ux, y)^2 + d(Uy, x)^2 \\
  &\geq d(Ux, y)^2 = d(Sx, y)^2 
  \geq \bigl(d(y, a) - d(Sx, a)\bigr)^2 > (\delta - r\bigr)^2 \geq 8r^2
 \end{align*}
 and 
 \[
  8r^2 
  \geq 2\bigl(d(Sx, a) + d(a, Ty)\bigr)^2
  \geq 2d(Sx, Ty)^2 = 2d(Ux, Uy)^2. 
 \]
 Hence we have 
 \begin{align}\label{eq:expl:mn-a}
  d(Ux, y)^2 + d(Uy, x)^2 > 2d(Ux, Uy)^2. 
 \end{align}
 If $x\in X\setminus \overline{B}_{\delta}(a)$ 
 and $y\in \overline{B}_{\delta}(a)$, then we also obtain~\eqref{eq:expl:mn-a}. 
 Therefore, the mapping $U$ is metrically nonspreading. 
\end{proof}

\begin{remark}
 It follows from Lemma~\ref{lem:fund-fmn-mn} and Example~\ref{expl:mn} that 
 there exists a metrically nonspreading mapping 
 which is not firmly metrically nonspreading. 
 In fact, let $X$ be an unbounded Hadamard space. 
 Then we have $a\in X$ and $\delta>0$ 
 such that $X\setminus \overline{B}_{\delta}(a)$ 
 is nonempty. 
 Let $r$ be the same as in Example~\ref{expl:mn}, 
 both $S$ and $T$ the metric projections of $X$ onto 
 $\{a\}$ and the closed ball $\overline{B}_r(a)$, respectively, 
 and $U$ the mapping defined as in Example~\ref{expl:mn}. 
 Then it follows from Example~\ref{expl:mn} 
 that $U$ is metrically nonspreading. 
 On the other hand, $U$ is discontinuous 
 at any $x\in X$ with $d(x, a)=\delta$.  
 Lemma~\ref{lem:fund-fmn-mn} implies that 
 $U$ is not firmly metrically nonspreading. 
\end{remark}

\section{Fixed points of metrically nonspreading mappings}
\label{sec:Fixed}

In this section, we study some fundamental properties of metrically
nonspreading mappings in Hadamard spaces. 

\begin{lemma}\label{lem:fpt}
 Let $X$ be a metric space, $T$ a metrically nonspreading mapping 
 of $X$ into itself, and 
 $\{x_n\}$ a sequence in $X$ such that 
 $\AC\bigl(\{x_n\}\bigr)=\{p\}$ for some $p\in X$. 
 If 
 \begin{itemize}
  \item $\limsup_n d(x_n, p) = \limsup_n d(Tx_n, p)$; 
  \item $\limsup_n d(x_n, Tp) = \limsup_n d(Tx_n, Tp)$, 
 \end{itemize}
 then $p$ is an element of $\Fix(T)$. 
\end{lemma}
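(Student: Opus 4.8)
The plan is to apply the metrically nonspreading inequality with the pair $(x,y)=(x_n,p)$ and take $\limsup_n$, exploiting the two hypothesized equalities to turn the inequality into a statement about $Tp$ versus $p$ that forces $Tp=p$.

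First I would write down, for each $n$, the defining inequality
\[
 2d(Tx_n, Tp)^2 \leq d(Tx_n, p)^2 + d(Tp, x_n)^2.
\]
Taking $\limsup_n$ on both sides and using subadditivity of $\limsup$ on the right, I get
\[
 2\limsup_n d(Tx_n, Tp)^2 \leq \limsup_n d(Tx_n, p)^2 + \limsup_n d(x_n, Tp)^2.
\]
Now I would invoke Lemma~\ref{lem:limsup} (with $I=[0,\infty)$ and $f(t)=t^2$) to commute the square past the $\limsup$, and then feed in the two hypotheses: $\limsup_n d(Tx_n, p) = \limsup_n d(x_n, p)$ and $\limsup_n d(Tx_n, Tp) = \limsup_n d(x_n, Tp)$. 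After substitution the inequality becomes
\[
 2\limsup_n d(x_n, Tp)^2 \leq \limsup_n d(x_n, p)^2 + \limsup_n d(x_n, Tp)^2,
\]
i.e.
\[
 \limsup_n d(x_n, Tp)^2 \leq \limsup_n d(x_n, p)^2.
\]
Applying Lemma~\ref{lem:limsup} once more (with $f$ the square root) yields $\limsup_n d(x_n, Tp) \leq \limsup_n d(x_n, p)$.

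The final step uses that $\AC(\{x_n\})=\{p\}$: by definition $p$ is the \emph{unique} point minimizing $y\mapsto \limsup_n d(y, x_n)$, so the inequality $\limsup_n d(Tp, x_n) \leq \limsup_n d(p, x_n) = \inf_{y\in X}\limsup_n d(y, x_n)$ forces $Tp\in\AC(\{x_n\})=\{p\}$, hence $Tp=p$. I expect the only subtlety worth stating carefully is the legitimacy of passing the monotone continuous functions $t\mapsto t^2$ and $t\mapsto\sqrt{t}$ through the $\limsup$ (which needs boundedness of the relevant sequences of distances, itself a consequence of $\AC(\{x_n\})=\{p\}$ being a singleton, forcing $\{x_n\}$ and hence $\{Tx_n\}$ bounded), together with keeping track of the subadditivity-of-$\limsup$ direction; there is no real obstacle beyond bookkeeping. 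Note the metric space need not even be assumed to be an Hadamard space here, so no convexity or $\CAT(0)$ input is required.
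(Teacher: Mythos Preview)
Your argument is correct and matches the paper's proof essentially line for line. One minor point: the inference ``$\{x_n\}$ bounded hence $\{Tx_n\}$ bounded'' is not immediate in a bare metric space (it is the content of Lemma~\ref{lem:bdd_on_bdd}, proved later), but you do not actually need it---the two hypotheses already force $\limsup_n d(Tx_n,p)$ and $\limsup_n d(Tx_n,Tp)$ to be finite, and that is all the boundedness Lemma~\ref{lem:limsup} requires.
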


\begin{proof}
 Since $\AC\bigl(\{x_n\}\bigr) = \{p\}$, 
 the sequence $\{x_n\}$ is bounded. 
 In fact, if $X$ is a singleton, then $\{x_n\}$ is obviously bounded.  
 In the other case, we have $q\in X$ which 
 is distinct from $p$ and hence 
 \[
  \limsup_{n} d(x_n, p) < \limsup_{n} d(x_n, q). 
 \]
 This implies the boundedness of $\{x_n\}$. 

 On the other hand, since $T$ is metrically nonspreading, we have 
 \[
  2d(Tx_n, Tp)^2 \leq d(Tx_n, p)^2 + d(Tp, x_n)^2. 
 \]
 Taking the upper limit gives us that 
 \[
  2\limsup_{n} d(Tx_n, Tp)^2 
  \leq \limsup_{n} d(Tx_n, p)^2 + \limsup_{n} d(Tp, x_n)^2. 
 \]
 By assumptions, we have 
 \begin{align}\label{eq:lem:fpt-a}
  2\limsup_{n} d(x_n, Tp)^2 
  \leq \limsup_{n} d(x_n, p)^2 + \limsup_{n} d(x_n, Tp)^2. 
 \end{align}
 Since $\{d(x_n, Tp)^2\}$ is a bounded sequence in $[0,\infty)$, 
 it follows from~\eqref{eq:lem:fpt-a} 
 and Lemma~\ref{lem:limsup} that 
 \begin{align*}
  \left(\limsup_{n} d(x_n, Tp)\right)^2 
  &=\limsup_{n} d(x_n, Tp)^2 \\
  &\leq \limsup_{n} d(x_n, p)^2 
  =\left(\limsup_{n} d(x_n, p)\right)^2 
 \end{align*}
 and hence 
 \[
  \limsup_{n}d(x_n, Tp) \leq \limsup_{n} d(x_n, p). 
 \]
 It then follows from $\AC\bigl(\{x_n\}\bigr)=\{p\}$ that $Tp=p$.  
\end{proof}

Using Lemma~\ref{lem:fpt}, we first obtain 
the following fixed point theorem for metrically nonspreading 
mappings in Hadamard spaces. 
This result also follows from the result~\cite{MR3037927}*{Lemma~4.7}. 
We note that the proof of~\cite{MR3037927}*{Lemma~4.7} 
is valid to the case where $1-2\alpha \geq 0$. 

\begin{theorem}\label{thm:fpt}
 Let $X$ be an Hadamard space 
 and $T$ a metrically nonspreading mapping of $X$ into itself. 
 Then $\Fix(T)$ is nonempty if and only if 
 $\{T^nx\}$ is bounded for some $x\in X$. 
\end{theorem}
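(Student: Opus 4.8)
The plan is to prove the two implications of the equivalence. The forward implication is immediate: if $u\in\Fix(T)$, then by the observation in the introduction that every metrically nonspreading mapping with a fixed point is quasinonexpansive, we have $d(u,T^nx)\le d(u,x)$ for all $n$, so $\{T^nx\}$ is bounded. The substance is the converse, and here the natural route is to produce a point $p$ that satisfies the hypotheses of Lemma~\ref{lem:fpt}. So suppose $\{T^nx\}$ is bounded for some $x\in X$, and set $x_n=T^nx$. By Lemma~\ref{lem:AC} the asymptotic center $\AC(\{x_n\})$ is a singleton $\{p\}$ (the Hadamard hypothesis is used here); we aim to show $p\in\Fix(T)$.

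The two bulleted conditions in Lemma~\ref{lem:fpt} both have the shape $\limsup_n d(x_n,y)=\limsup_n d(Tx_n,y)$ for $y\in\{p,Tp\}$, and since $Tx_n=T^{n+1}x=x_{n+1}$, these become $\limsup_n d(x_n,y)=\limsup_n d(x_{n+1},y)$, which is trivially true for \emph{every} $y\in X$, because shifting a sequence by one index does not change its $\limsup$. Thus both hypotheses of Lemma~\ref{lem:fpt} hold automatically, and the lemma yields $Tp=p$, completing the proof. In other words, the Picard iterates being bounded, together with completeness (via the asymptotic-center argument), is exactly what is needed to feed Lemma~\ref{lem:fpt}.

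The only point requiring a little care is the passage from ``$\AC(\{x_n\})$ is a singleton'' being an abstract fact (Lemma~\ref{lem:AC}) to the hypothesis in Lemma~\ref{lem:fpt} being literally met; since Lemma~\ref{lem:fpt} is stated for a general metric space with the assumption $\AC(\{x_n\})=\{p\}$, and here $X$ is Hadamard and $\{x_n\}$ is bounded, Lemma~\ref{lem:AC} supplies precisely such a $p$. I would also note explicitly that no hypothesis on $T$ beyond metric nonspreadingness is needed in this direction, and that the asymptotic-regularity-type input one might expect is replaced here by the trivial index-shift identity. I do not anticipate a genuine obstacle; the mild subtlety is merely bookkeeping about which space-level hypothesis ($\CAT(0)$ vs. completeness) is invoked at the step where the singleton asymptotic center is produced.
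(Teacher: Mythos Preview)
Your proof is correct and follows essentially the same route as the paper: set $x_n=T^nx$, invoke Lemma~\ref{lem:AC} to obtain a singleton asymptotic center $\{p\}$, observe that $Tx_n=x_{n+1}$ so the index-shift makes the two hypotheses of Lemma~\ref{lem:fpt} hold for every $y\in X$, and conclude $p\in\Fix(T)$. The paper handles the forward implication by simply calling it obvious, whereas you spell out the quasinonexpansiveness; otherwise the arguments coincide.
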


\begin{proof}
 Since the only if part is obvious, 
 it is sufficient to prove the if part. 
 Suppose that $\{T^nx\}$ is bounded for some $x\in X$ 
 and let $\{x_n\}$ be the sequence in $X$ defined by 
 $x_n=T^nx$ for all $n\in \N$. 
 It then follows from Lemma~\ref{lem:AC} that 
 $\AC\bigl(\{x_n\}\bigr) =\{p\}$ for some $p\in X$.  
 By the definition of $\{x_n\}$, we have 
 \[
  \limsup_{n} d(x_n, y) = \limsup_{n} d(Tx_n, y)
 \]
 for all $y\in X$. Thus it follows from Lemma~\ref{lem:fpt} 
 that $p$ is an element of $\Fix(T)$. 
\end{proof}

As a direct consequence of Theorem~\ref{thm:fpt}, 
we obtain the following corollary. 

\begin{corollary}\label{cor:fpt}
 Every metrically nonspreading mapping of a bounded Hadamard space 
 into itself has a fixed point. 
\end{corollary}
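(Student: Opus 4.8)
The plan is to read this off immediately from Theorem~\ref{thm:fpt}. Fix an arbitrary point $x\in X$ (such a point exists since a metric space is nonempty by convention, and if one is worried about the empty case the statement is vacuous there anyway). Since $X$ is bounded, the orbit $\{T^nx\}$ is a subset of $X$ and is therefore a bounded sequence. Applying the ``if'' direction of Theorem~\ref{thm:fpt}, which requires precisely that $X$ be an Hadamard space and $T$ be metrically nonspreading, we conclude that $\Fix(T)$ is nonempty.

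I do not expect any obstacle: the entire content of the corollary is the observation that in a bounded space every orbit is automatically bounded, so the hypothesis ``$\{T^nx\}$ is bounded for some $x\in X$'' of Theorem~\ref{thm:fpt} is satisfied for free. All the real work has already been done in Theorem~\ref{thm:fpt} (and, beneath it, in Lemma~\ref{lem:fpt} together with Lemma~\ref{lem:AC}). Thus the proof is a single sentence: pick $x\in X$, note $\{T^nx\}\subseteq X$ is bounded, and invoke Theorem~\ref{thm:fpt}.
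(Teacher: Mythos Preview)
Your proposal is correct and matches the paper's approach exactly: the paper states the corollary as ``a direct consequence of Theorem~\ref{thm:fpt}'' without further proof, which is precisely your observation that boundedness of $X$ makes the orbit $\{T^nx\}$ bounded for any $x\in X$.
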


We can also show the following common fixed point theorem. 
\begin{theorem}
 \label{prob:thm-finite-commut-nonsp}
 Let $X$ be a bounded Hadamard space 
 and $\{T_{k}\}_{k=1}^{m}$ 
 a commutative finite family of metrically nonspreading mappings 
 of $X$ into itself. 
 Then $\bigcap_{k=1}^{m}\Fix(T_{k})$ is nonempty. 
\end{theorem}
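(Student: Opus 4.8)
The plan is to prove the common fixed point theorem by induction on $m$, the size of the commuting family. The base case $m=1$ is exactly Corollary~\ref{cor:fpt}, since $X$ is a bounded Hadamard space. For the inductive step, suppose $F:=\bigcap_{k=1}^{m-1}\Fix(T_k)$ is nonempty; we have already noted in the excerpt that the fixed point set of a metrically nonspreading mapping with a fixed point is quasinonexpansive, hence closed and convex, so $F$ is a nonempty closed convex subset of $X$, and therefore $F$ is itself a bounded Hadamard space in the induced metric. The key point I would establish is that $T_m$ maps $F$ into $F$: indeed, for $u\in F$ and any $k<m$, commutativity gives $T_k(T_m u)=T_m(T_k u)=T_m u$, so $T_m u\in\Fix(T_k)$ for every $k<m$, i.e.\ $T_m u\in F$. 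The restriction $T_m|_F$ of $T_m$ to $F$ is still metrically nonspreading (the defining inequality only involves the metric, which is inherited). Applying Corollary~\ref{cor:fpt} to $T_m|_F\colon F\to F$ yields a point $p\in F$ with $T_m p=p$; then $p\in\bigcap_{k=1}^{m}\Fix(T_k)$, completing the induction.

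Carrying this out, the steps in order are: (1) invoke the stated fact that for a quasinonexpansive self-map $T$ of a $\CAT(0)$ space $\Fix(T)$ is closed and convex, together with the observation from the introduction that a metrically nonspreading mapping with a fixed point is quasinonexpansive, to conclude $\Fix(T_k)$ is closed and convex whenever nonempty; (2) note finite intersections of closed convex sets are closed and convex, and that a nonempty closed convex subset of an Hadamard space, with the induced metric, is again an Hadamard space (closedness gives completeness, convexity gives that geodesics stay inside and the $\CAT(0)$ inequality is inherited), and boundedness is inherited; (3) use commutativity to show $T_m(F)\subseteq F$; (4) observe the restriction is metrically nonspreading; (5) apply Corollary~\ref{cor:fpt} and conclude.

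The main subtlety—though it is more bookkeeping than a genuine obstacle—is making sure each hypothesis of Corollary~\ref{cor:fpt} genuinely transfers to the restricted setting: that $F$ is \emph{nonempty} (which is precisely the inductive hypothesis and why the induction is needed rather than a direct argument), that $F$ is an Hadamard space (completeness from closedness, the $\CAT(0)$ structure from convexity), that $F$ is bounded (clear, as $F\subseteq X$), and that $T_m$ actually leaves $F$ invariant (the commutativity computation). One should also check at the base case that $\Fix(T_1)\ne\emptyset$, which is Corollary~\ref{cor:fpt} directly. No quantitative estimate is required; the whole argument is structural, resting on the already-established single-mapping fixed point theorem for bounded Hadamard spaces and the closedness-convexity of fixed point sets.
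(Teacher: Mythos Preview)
Your proposal is correct and follows essentially the same approach as the paper's own proof: induction on $m$, with the base case given by the single-mapping fixed point theorem in a bounded Hadamard space, and the inductive step carried out by showing the intersection $F=\bigcap_{k=1}^{m-1}\Fix(T_k)$ is a nonempty bounded Hadamard space invariant under $T_m$ (via commutativity), so the single-mapping theorem applies again to $T_m|_F$. The only cosmetic difference is that the paper cites Theorem~\ref{thm:fpt} rather than Corollary~\ref{cor:fpt}, but in the bounded setting these amount to the same thing.
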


\begin{proof}
 The proof is given by induction on $m$. 
 Theorem~\ref{thm:fpt} implies that 
 the conclusion holds if $m=1$. 
 Suppose that the conclusion holds for some $m=l \in \N$ 
 and let $\{T_{k}\}_{k=1}^{l+1}$ be a commutative family 
 of metrically nonspreading mappings of $X$ into itself. 
 Then the set $Y$ given by $Y=\bigcap_{k=1}^{l}\Fix(T_k)$ 
 is a nonempty closed convex subset of $X$. 
 Hence $Y$ is an Hadamard space.    
 We next show that $T_{l+1}Y$ is contained by $Y$. 
 In fact, if $v\in Y$ and $k\in \{1,2,\dots ,l\}$, 
 then it follows from $T_kv=v$ and $T_{l+1}T_{k}=T_{k}T_{l+1}$ 
 that 
 \[
  T_{l+1}v = T_{l+1} T_{k} v = T_{k} T_{l+1} v
 \]
 and hence $T_{l+1}v\in \Fix(T_{k})$. 
 Thus $T_{l+1}Y$ is a subset of $Y$. 
 Accordingly, the restriction of $T_{l+1}$ to 
 the Hadamard space $Y$ is a metrically nonspreading self mapping on $Y$. 
 Then Theorem~\ref{thm:fpt} ensures that there exists $u\in Y$ such that 
 $T_{l+1}u=u$, and hence $u\in \bigcap _{k=1}^{l+1}\Fix(T_k)$. 
 Therefore, the set $\bigcap _{k=1}^{l+1}\Fix(T_k)$ is nonempty. 
\end{proof}

Using Lemma~\ref{lem:fpt}, we next obtain 
the following demiclosed principle for metrically nonspreading mappings. 

\begin{theorem}\label{thm:demiclosed}
 Let $X$ be a metric space, 
 $T$ a metrically nonspreading mapping of $X$ into itself,  
 and $\{x_n\}$ a sequence in $X$ such that 
 $\AC\bigl(\{x_n\}\bigr)=\{p\}$ 
 for some $p\in X$ 
 and $d(Tx_n, x_n)\to 0$ as $n\to \infty$. 
 Then $p$ is an element of $\Fix(T)$. 
\end{theorem}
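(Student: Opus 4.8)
The plan is to reduce Theorem~\ref{thm:demiclosed} to Lemma~\ref{lem:fpt}. Lemma~\ref{lem:fpt} requires two hypotheses beyond $\AC(\{x_n\})=\{p\}$: namely $\limsup_n d(x_n,p)=\limsup_n d(Tx_n,p)$ and $\limsup_n d(x_n,Tp)=\limsup_n d(Tx_n,Tp)$. So the entire task is to derive these two equalities from the single assumption $d(Tx_n,x_n)\to 0$.

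First I would record that the boundedness of $\{x_n\}$ follows exactly as in the proof of Lemma~\ref{lem:fpt} (either $X$ is a singleton, or there is a point $q\neq p$ and the asymptotic-center property forces $\limsup_n d(x_n,p)<\limsup_n d(x_n,q)$, which is impossible unless $\{x_n\}$ is bounded). Then, for any fixed $y\in X$, the triangle inequality gives
\begin{align*}
 \abs{d(Tx_n,y)-d(x_n,y)} \leq d(Tx_n,x_n) \to 0,
\end{align*}
so $d(Tx_n,y)$ and $d(x_n,y)$ have the same $\limsup$ (in fact their difference tends to $0$). Applying this with $y=p$ gives the first bullet hypothesis of Lemma~\ref{lem:fpt}, and applying it with $y=Tp$ gives the second. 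Hence all hypotheses of Lemma~\ref{lem:fpt} are verified, and we conclude $p\in\Fix(T)$.

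I do not expect any real obstacle here; the statement is essentially a corollary of Lemma~\ref{lem:fpt} once one observes that asymptotic regularity of the single orbit step, i.e.\ $d(Tx_n,x_n)\to 0$, uniformly controls $d(\,\cdot\,,y)$ along the two sequences for every $y$. The only point worth a sentence is that we need $\{x_n\}$ bounded so that the relevant $\limsup$'s are finite and Lemma~\ref{lem:fpt} genuinely applies; this is immediate from $\AC(\{x_n\})=\{p\}$ being a nonempty proper subset of $X$ (or $X$ a singleton). No completeness or $\CAT(0)$ structure is needed for this argument, consistent with the theorem being stated for a general metric space $X$.
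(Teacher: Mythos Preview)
Your proposal is correct and follows essentially the same route as the paper: use the triangle inequality with $d(Tx_n,x_n)\to 0$ to get $\limsup_n d(x_n,y)=\limsup_n d(Tx_n,y)$ for every $y\in X$, then invoke Lemma~\ref{lem:fpt}. Your discussion of boundedness is fine but redundant, since it is already established inside the proof of Lemma~\ref{lem:fpt} from the hypothesis $\AC(\{x_n\})=\{p\}$.
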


\begin{proof}
 Since $d(Tx_n, x_n) \to 0$ as $n\to \infty$, 
 we have 
 \[
  \limsup_n d(x_n, y) = \limsup_n d(Tx_n, y)
 \]
 for all $y\in X$. Thus it follows from Lemma~\ref{lem:fpt} 
 that $p$ is an element of $\Fix(T)$. 
\end{proof}

Using Lemma~\ref{lem:double-sequence}, 
we next show the asymptotic regularity of metrically nonspreading mappings. 

\begin{lemma}\label{lem:asymp}
 Let $X$ be a metric space and 
 $T$ a metrically nonspreading mapping of $X$ into itself  
 such that $\Fix (T)$ is nonempty. 
 Then $T$ is asymptotically regular. 
\end{lemma}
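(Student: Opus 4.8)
The plan is to apply Lemma~\ref{lem:double-sequence} to the array of squared distances along a single orbit of $T$. Fix $x\in X$ and set $x_n = T^{n-1}x$ for all $n\in\N$, so that $x_1=x$ and $x_{n+1}=Tx_n$ for every $n$. Define $A\colon \N\times\N\to[0,\infty)$ by $A(n,m)=d(x_n,x_m)^2$. Then $A(n,n)=0$ for all $n$, and since $x_{n+1}=Tx_n$ and $x_{m+1}=Tx_m$, the defining inequality of metric nonspreadingness applied to the pair $(x_n,x_m)$ becomes exactly the descent condition required by Lemma~\ref{lem:double-sequence}:
\[
 2A(n+1,m+1) = 2d(Tx_n,Tx_m)^2
 \leq d(Tx_n,x_m)^2 + d(Tx_m,x_n)^2
 = A(n+1,m) + A(n,m+1)
\]
for all $n,m\in\N$.

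The only remaining hypothesis of Lemma~\ref{lem:double-sequence} is the boundedness of $A$, and this is where the assumption $\Fix(T)\neq\emptyset$ is used. Pick $u\in\Fix(T)$. Since every metrically nonspreading mapping with a fixed point is quasinonexpansive (as noted in Section~\ref{sec:intro}), we have $d(u,x_n)=d(u,T^{n-1}x)\leq d(u,x)$ for all $n$, so the triangle inequality yields $A(n,m)=d(x_n,x_m)^2\leq\bigl(2d(u,x)\bigr)^2=4d(u,x)^2$ for all $n,m\in\N$. Hence $A$ is bounded, and Lemma~\ref{lem:double-sequence} gives $\lim_n A(n,n+1)=0$, that is, $\lim_n d(T^{n-1}x,T^nx)^2=0$. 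Reindexing, $\lim_n d(T^{n+1}x,T^nx)=0$, and since $x\in X$ was arbitrary, $T$ is asymptotically regular.

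I do not expect any genuine obstacle here: the entire content is the observation that the metrically nonspreading inequality is precisely the two-variable descent inequality of Lemma~\ref{lem:double-sequence} for the array $A(n,m)=d(T^{n-1}x,T^{m-1}x)^2$, together with the fact that a fixed point forces the orbit $\{T^n x\}$ to be bounded via quasinonexpansiveness. The verification of the boundedness clause is the only place the full hypothesis of the lemma is needed.
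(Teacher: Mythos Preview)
Your proof is correct and follows essentially the same route as the paper: both define the array $A(n,m)$ of squared distances along the orbit, use quasinonexpansiveness at a fixed point to get boundedness, and invoke Lemma~\ref{lem:double-sequence} to conclude. The only cosmetic difference is your index shift $x_n=T^{n-1}x$ versus the paper's direct use of $T^n x$.
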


\begin{proof}
 Let $x\in X$ be given. 
 Since $\Fix(T)$ is nonempty and $T$ is quasinonexpansive, 
 $\{T^nx\}$ is bounded. 
 Let $A$ be the bounded function of $\N \times \N$ 
 into $[0,\infty)$ defined by 
  $A(n, m) = d(T^nx, T^mx)^2$ 
 for all $n,m\in \N$. 
 It is clear that $A(n,n)=0$ for all $n\in \N$. 
 Since $T$ is metrically nonspreading, we have 
 \[
  2d(T^{n+1}x, T^{m+1}x)^2 
  \leq d(T^{n+1}x, T^{m}x)^2 + d(T^{m+1}x, T^{n}x)^2
 \]
 and hence 
 \[
  2A(n+1,m+1) \leq A(n+1,m) + A(n,m+1)
 \]
 for all $n,m\in \N$. 
 It then follows from Lemma~\ref{lem:double-sequence} that 
 \[
  d(T^{n}x, T^{n+1}x) = \sqrt{A(n,n+1)} \to 0 
 \]
 as $n\to \infty$. Therefore, the mapping $T$ is asymptotically
 regular. 
\end{proof}

We can directly show the asymptotic regularity 
for firmly metrically nonspreading mappings as follows.  

\begin{lemma}\label{lem:asymp_firm}
 Let $X$ be a metric space 
 and $T$ a firmly metrically nonspreading mapping 
 of $X$ into itself  
 such that $\Fix(T)$ is nonempty. 
 Then $T$ is asymptotically regular. 
\end{lemma}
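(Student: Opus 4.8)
The plan is to exploit the quasinonexpansive-type estimate~\eqref{eq:fmn-quasi}, which was already recorded for firmly metrically nonspreading mappings with a fixed point: for every $u\in \Fix(T)$ and every $x\in X$,
\[
 d(u, Tx)^2 + d(Tx, x)^2 \leq d(u, x)^2.
\]
Since $\Fix(T)$ is nonempty, I would first fix some $u\in \Fix(T)$ and an arbitrary $x\in X$, and then apply this inequality with $x$ replaced by $T^n x$. This yields
\[
 d(u, T^{n+1}x)^2 + d(T^{n+1}x, T^n x)^2 \leq d(u, T^n x)^2
\]
for all $n\in \N$.

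From this the argument is essentially immediate. First, dropping the nonnegative term $d(T^{n+1}x, T^n x)^2$ shows that the sequence $\{d(u, T^n x)^2\}$ is nonincreasing and bounded below by $0$, hence convergent to some limit $c\geq 0$. Second, rearranging the same inequality gives
\[
 d(T^{n+1}x, T^n x)^2 \leq d(u, T^n x)^2 - d(u, T^{n+1}x)^2,
\]
and the right-hand side tends to $c-c=0$ as $n\to\infty$. Therefore $d(T^{n+1}x, T^n x)\to 0$. Since $x\in X$ was arbitrary, $T$ is asymptotically regular.

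There is no real obstacle here: the only input beyond elementary facts about monotone real sequences is the estimate~\eqref{eq:fmn-quasi}, which is already available in the excerpt, so this proof is considerably shorter and more direct than the one for Lemma~\ref{lem:asymp} (which had to route through the double-sequence Lemma~\ref{lem:double-sequence}). The one point worth stating carefully is that the boundedness of $\{T^n x\}$ — needed implicitly for the limit $c$ to be finite — is automatic, since $d(u, T^n x)\leq d(u, x)$ for all $n$.
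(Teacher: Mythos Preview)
Your proof is correct and is essentially identical to the paper's own argument: both fix $u\in\Fix(T)$, apply~\eqref{eq:fmn-quasi} with $x$ replaced by $T^n x$ to obtain $d(u,T^{n+1}x)^2 + d(T^{n+1}x,T^n x)^2 \leq d(u,T^n x)^2$, deduce that $\{d(u,T^n x)^2\}$ is nonincreasing and hence convergent, and conclude by rearranging. The additional remarks you make about boundedness and the contrast with Lemma~\ref{lem:asymp} are accurate but not part of the paper's proof.
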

 
\begin{proof}
 Let $x\in X$ be given. By assumption, there exists $u\in \Fix(T)$. 
 Since $T$ is firmly metrically nonspreading, 
 it follows from~\eqref{eq:fmn-quasi} that 
 \begin{align*}
  d(u, T^{n+1}x)^2
  \leq d(u, T^{n+1}x)^2 + d(T^{n+1}x,T^{n}x)^2 
  \leq d(u, T^{n}x)^2. 
 \end{align*}
 This implies that $\{d(u, T^{n}x)^2\}$ is convergent and hence 
 \[
  0\leq d(T^{n+1}x, T^{n}x)^2 
   \leq d(u, T^{n}x)^2 - d(u, T^{n+1}x)^2 
   \to 0
 \]
 as $n\to \infty$. Consequently, 
 the mapping $T$ is asymptotically regular. 
\end{proof}

\begin{theorem}\label{thm:conv}
 Let $X$ be an Hadamard space 
 and $T$ a metrically nonspreading mapping 
 of $X$ into itself 
 such that $\Fix(T)$ is nonempty. 
 Then $\{T^nx\}$ is $\Delta$-convergent to 
 an element of $\Fix(T)$ for all $x\in X$. 
\end{theorem}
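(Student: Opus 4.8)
The plan is to run the standard scheme for proving $\Delta$-convergence of Picard iterates in Hadamard spaces, feeding three previously established ingredients into one another: the asymptotic regularity of $T$ from Lemma~\ref{lem:asymp}, the demiclosed principle of Theorem~\ref{thm:demiclosed}, and the $\Delta$-convergence criterion of Lemma~\ref{lem:KK-Delta}. Fix $x\in X$ and put $x_n=T^nx$ for all $n\in\N$. First I would observe that, since $\Fix(T)$ is nonempty and a metrically nonspreading mapping with a fixed point is quasinonexpansive, for any $u\in\Fix(T)$ the sequence $\{d(u,x_n)\}$ is nonincreasing; in particular $\{x_n\}$ is bounded. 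Then Lemma~\ref{lem:asymp} applies and yields $d(x_{n+1},x_n)\to 0$ as $n\to\infty$.

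The heart of the argument is to show that every point of $\omega_{\Delta}(\{x_n\})$ lies in $\Fix(T)$. Given $z\in\omega_{\Delta}(\{x_n\})$, choose a subsequence $\{x_{n_i}\}$ with $\AC(\{x_{n_i}\})=\{z\}$. Because $d(Tx_{n_i},x_{n_i})=d(x_{n_i+1},x_{n_i})\to 0$ as $i\to\infty$, the demiclosed principle Theorem~\ref{thm:demiclosed}, applied to the metrically nonspreading mapping $T$ and the sequence $\{x_{n_i}\}$, forces $z\in\Fix(T)$. Once $z\in\Fix(T)$, quasinonexpansiveness gives $d(z,x_{n+1})=d(z,Tx_n)\le d(z,x_n)$, so $\{d(z,x_n)\}$ is nonincreasing and hence convergent.

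Having verified that $\{d(z,x_n)\}$ converges for each $z\in\omega_{\Delta}(\{x_n\})$, I would invoke Lemma~\ref{lem:KK-Delta} to conclude that $\{x_n\}$ is $\Delta$-convergent, say to $p\in X$. Then $\omega_{\Delta}(\{x_n\})=\{p\}$, and the fixed-point step above applied to $z=p$ shows $p\in\Fix(T)$, which is precisely the assertion of Theorem~\ref{thm:conv}. I do not expect a serious obstacle here; the only point requiring care is that the demiclosed principle must be applied to a subsequence of $\{x_n\}$ rather than to $\{x_n\}$ itself, so one genuinely needs the asymptotic regularity of $T$ (which gives $d(Tx_{n_i},x_{n_i})\to 0$ along every subsequence) rather than just a single limit statement.
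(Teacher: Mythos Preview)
Your proposal is correct and follows essentially the same route as the paper's own proof: establish boundedness via quasinonexpansiveness, apply asymptotic regularity (Lemma~\ref{lem:asymp}) and the demiclosed principle (Theorem~\ref{thm:demiclosed}) to show $\omega_{\Delta}(\{x_n\})\subset\Fix(T)$, then invoke Lemma~\ref{lem:KK-Delta}. The only difference is cosmetic---you make the boundedness of $\{x_n\}$ explicit before discussing $\omega_{\Delta}(\{x_n\})$, whereas the paper leaves this implicit.
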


\begin{proof}
 Let $z$ be an element of $\omega_{\Delta}\bigl(\{T^nx\}\bigr)$. 
 Then we have a subsequence $\{T^{n_i}x\}$ of $\{T^{n}x\}$ 
 which is $\Delta$-convergent to $z$. 
 In particular, we have $\AC\bigl(\{T^{n_i}x\}\bigr)=\{z\}$. 
 Since $\Fix(T)$ is nonempty, it follows from Lemma~\ref{lem:asymp} that 
 \[
  d\bigl(T(T^{n_i}x), T^{n_i}x\bigr) 
  = d(T^{n_i+1}x, T^{n_i}x) 
  \to 0 
 \]
 as $i\to \infty$. 
 Lemma~\ref{thm:demiclosed} then ensures that $z$ is an 
 element of $\Fix(T)$. 
 It also follows from $d(z, T^{n+1}x) \leq d(z, T^{n}x)$ 
 that $\{d(z, T^{n}x)\}$ is convergent. 
 Thus the sequence $\{d(z, T^{n}x)\}$ is convergent 
 for each $z$ in $\omega_{\Delta}\bigl(\{T^nx\}\bigr)$. 
 By Lemma~\ref{lem:KK-Delta}, we conclude that 
 $\{T^nx\}$ is $\Delta$-convergent to some $u\in X$. 
 Since 
 \[
  \{u\} = \omega_{\Delta}\bigl(\{T^nx\}\bigr) \subset \Fix(T), 
 \]
 we conclude that $u$ is an element of $\Fix(T)$. 
\end{proof}

\section{Asymptotic behavior of the Mann iteration}
\label{sec:Mann}

In this section, we study the asymptotic behavior of 
sequences generated by the Mann iteration~\cite{MR0054846} 
for metrically nonspreading mappings in Hadamard spaces. 

Motivated by~\cite{MR3289781}*{Lemma~3.1}, 
we first show the following equivalence. 

\begin{lemma}\label{lem:equiv-nonsp}
 Let $X$ be a metric space and $T$ a mapping of $X$ into itself.  
 Then $T$ is metrically nonspreading if and only if 
\[
 0\leq d(Ty, y)^2 
 + 2 \ip{\overrightarrow{(Tx)(Ty)}}{\overrightarrow{(Ty)y}} 
 + d(Ty, x)^2 - d(Tx, Ty)^2 
\]
 for all $x,y\in X$. 
 \end{lemma}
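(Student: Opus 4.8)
The plan is to prove the equivalence by rewriting the metrically nonspreading inequality $2d(Tx,Ty)^2 \leq d(Tx,y)^2 + d(Ty,x)^2$ purely in terms of quasilinearization, and noticing that the asserted inequality is simply an algebraic rearrangement of the same thing. The key observation is that $d(Tx,y)^2$ can be decomposed using the last bullet in the list of quasilinearization identities from Section~\ref{sec:pre}, namely $d(a,b)^2 = d(a,c)^2 + d(c,b)^2 + 2\ip{\overrightarrow{ac}}{\overrightarrow{cb}}$, with the insertion point $c = Ty$. Applying this with $a = Tx$, $b = y$, $c = Ty$ gives
\[
 d(Tx,y)^2 = d(Tx,Ty)^2 + d(Ty,y)^2 + 2\ip{\overrightarrow{(Tx)(Ty)}}{\overrightarrow{(Ty)y}}.
\]

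Substituting this into the metrically nonspreading inequality, the term $d(Tx,Ty)^2$ on the right partially cancels one of the two copies on the left: $2d(Tx,Ty)^2 \leq d(Tx,Ty)^2 + d(Ty,y)^2 + 2\ip{\overrightarrow{(Tx)(Ty)}}{\overrightarrow{(Ty)y}} + d(Ty,x)^2$, which rearranges immediately to
\[
 0 \leq d(Ty,y)^2 + 2\ip{\overrightarrow{(Tx)(Ty)}}{\overrightarrow{(Ty)y}} + d(Ty,x)^2 - d(Tx,Ty)^2.
\]
This is exactly the stated inequality, and since every step is a reversible identity or cancellation, the converse direction follows by reading the same chain backwards.

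So the proof is essentially one application of the decomposition identity plus bookkeeping; there is no real obstacle. One small point to be careful about is that the identity being invoked is stated for all $x,y,z$ in any metric space $X$ (not merely a $\CAT(0)$ space), so the equivalence holds in the generality claimed, without invoking the Cauchy--Schwarz inequality~\eqref{eq:CS-ineq} or any curvature assumption. I would write the proof as: fix $x,y \in X$, state the decomposition identity with insertion point $Ty$, substitute, cancel, and observe that the resulting inequality is equivalent to $2d(Tx,Ty)^2 \leq d(Tx,y)^2 + d(Ty,x)^2$; then conclude that $T$ is metrically nonspreading if and only if the displayed inequality holds for all $x,y \in X$.
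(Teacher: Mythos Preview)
Your proposal is correct and matches the paper's proof essentially line for line: the paper also fixes $x,y\in X$, applies the identity $d(a,b)^2 = d(a,c)^2 + d(c,b)^2 + 2\ip{\overrightarrow{ac}}{\overrightarrow{cb}}$ with $a=Tx$, $b=y$, $c=Ty$ to rewrite $d(Tx,y)^2$, and then simplifies $d(Tx,y)^2 + d(Ty,x)^2 - 2d(Tx,Ty)^2$ to the displayed expression, from which the equivalence is immediate. Your remark that no curvature assumption is needed is also in line with the paper's formulation for a general metric space.
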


\begin{proof}
 Let $x,y\in X$ be given. Then we have 
  \begin{align*}
  &d(Tx, y)^2 + d(Ty, x)^2 - 2d(Tx, Ty)^2 \\
  & = d(Tx, Ty)^2 + d(Ty, y)^2 
 + 2 \ip{\overrightarrow{(Tx)(Ty)}}{\overrightarrow{(Ty)y}} 
 + d(Ty, x)^2 - 2d(Tx, Ty)^2 \\
  & = d(Ty, y)^2 
 + 2 \ip{\overrightarrow{(Tx)(Ty)}}{\overrightarrow{(Ty)y}} 
 + d(Ty, x)^2 - d(Tx, Ty)^2 
 \end{align*}
 and hence the result follows. 
\end{proof}

Motivated by~\cite{MR3289781}*{Lemma~3.2}, 
we next show that every metrically nonspreading mapping is bounded 
on every bounded subset.  

\begin{lemma}\label{lem:bdd_on_bdd}
 Let $X$ be a metric space 
 and $T$ a metrically nonspreading mapping 
 of $X$ into itself.  
 Then $T(U)$ is bounded 
 for each nonempty bounded subset $U$ of $X$.  
\end{lemma}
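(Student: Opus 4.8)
The plan is to show that $T(U)$ lies in a ball of finite radius centered at the image of a single fixed reference point. So, fixing an arbitrary $x_0\in U$, I set $M=\sup_{x\in U}d(x_0,x)$ and $c=d(Tx_0,x_0)$; since $U$ is bounded, both $M$ and $c$ are finite. The goal becomes to bound $r:=d(Tx,Tx_0)$ uniformly over $x\in U$.

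For an arbitrary $x\in U$, I apply the metrically nonspreading inequality to the pair $(x,x_0)$ to get
\[
 2d(Tx,Tx_0)^2 \le d(Tx,x_0)^2 + d(Tx_0,x)^2 .
\]
I then estimate the right-hand side by the triangle inequality: on the one hand $d(Tx_0,x)\le d(Tx_0,x_0)+d(x_0,x)\le c+M$, which is bounded independently of $x$; on the other hand $d(Tx,x_0)\le d(Tx,Tx_0)+d(Tx_0,x_0)=r+c$, which reintroduces the very quantity $r$ to be controlled. Substituting yields $2r^2\le (r+c)^2+(c+M)^2$, i.e.\ the quadratic inequality $r^2-2cr-c^2-(c+M)^2\le 0$ in the single unknown $r\ge 0$. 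Solving it gives $r\le c+\sqrt{2c^2+(c+M)^2}=:\rho$, a bound not depending on $x$. Hence $d(Tx,Tx_0)\le \rho$ for every $x\in U$, and one further triangle inequality gives $\Diam\bigl(T(U)\bigr)\le 2\rho<\infty$, so $T(U)$ is bounded.

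The only point that requires care — and the reason the statement is not entirely immediate — is that the right-hand side of the defining inequality involves $d(Tx,y)$, which depends on the image point $Tx$ one is trying to bound. The resolution is the self-bounding device used above: specialize $y$ to the fixed reference point $x_0$, confine the $Tx$-dependence to the single term $d(Tx,x_0)\le r+c$, and absorb it by solving the resulting quadratic in $r$. No completeness or $\CAT(0)$ structure is needed; the argument is valid in an arbitrary metric space, which matches the hypotheses of the lemma.
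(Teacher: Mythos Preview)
Your proof is correct, and it takes a genuinely different route from the paper's. The paper argues by contradiction: it assumes there is a bounded sequence $\{x_n\}$ with $\{Tx_n\}$ unbounded, invokes the equivalent characterization of metric nonspreadingness from Lemma~\ref{lem:equiv-nonsp}, factors the resulting difference of squares to obtain
\[
 d(Tx_n,Ty)^2 \le \bigl(2d(Tx_n,Ty)+d(Ty,y)\bigr)d(y,Ty)+d(Ty,x_n)^2,
\]
and then divides through by $d(Tx_{n_i},Ty)\to\infty$ along a suitable subsequence to reach a contradiction. You instead work directly: fix a reference point $x_0\in U$, specialize the nonspreading inequality to the pair $(x,x_0)$, use the triangle inequality to isolate $r=d(Tx,Tx_0)$ in a quadratic, and solve it. Your argument is more self-contained (no auxiliary lemma, no contradiction, no limiting step) and in fact yields the explicit quantitative bound $\Diam\bigl(T(U)\bigr)\le 2\bigl(c+\sqrt{2c^2+(c+M)^2}\bigr)$ in terms of $c=d(Tx_0,x_0)$ and $M=\sup_{x\in U}d(x_0,x)$, which the paper's approach does not provide.
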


\begin{proof}
 If the conclusion does not hold, then 
 there exists a bounded sequence $\{x_n\}$ 
 such that $\{Tx_n\}$ is unbounded. Fix $p\in X$. 
 Since 
 \[
  d(Tx_k, Tx_l) \leq d(Tx_k, p) + d(p, Tx_l) \leq 2\sup_{n} d(Tx_n, p)
 \] 
 for all $k, l\in \N$ and $\{Tx_n\}$ is unbounded, 
 we then have $\sup_{n} d(Tx_n, p) =\infty$.  
 This implies that there exists a subsequence $\{Tx_{n_i}\}$ of $\{Tx_n\}$ such that 
 $\{d(Tx_{n_i}, p)\}$ is divergent to $\infty$ as $i\to \infty$.     
 This gives us that 
 \[
  \lim_{i\to \infty} d(Tx_{n_i}, z) = \infty 
 \]
 for all $z\in X$. 
 Let $y\in X$ be given. It follows from Lemma~\ref{lem:equiv-nonsp} that 
\begin{align*}
 &d(Tx_n, Ty)^2 \\
 &\leq d(Ty, y)^2 
 + 2 \ip{\overrightarrow{(Tx_n)(Ty)}}{\overrightarrow{(Ty)y}} 
 + d(Ty, x_n)^2 \\
 &= d(Ty, y)^2 
 + \left(d(Tx_n, y)^2 -d(Tx_n, Ty)^2 - d(Ty, y)^2\right)
 + d(Ty, x_n)^2 \\
 &\leq \bigl(d(Tx_n, y) + d(Tx_n, Ty)\bigr) d(y, Ty) 
 + d(Ty, x_n)^2 \\
 &\leq \bigl(2d(Tx_n, Ty) + d(Ty, y)\bigr) d(y, Ty) 
 + d(Ty, x_n)^2. 
\end{align*}
 Letting $i\to \infty$ in 
 \begin{align*}
  d(Tx_{n_i}, Ty) 
 \leq \left(2+\frac{d(Ty, y)}{d(Tx_{n_i}, Ty)}\right) d(y, Ty) 
 + \frac{d(x_{n_i}, Ty)^2}{d(Tx_{n_i}, Ty)} 
 \end{align*}
 gives us a contradiction. Thus the set $T(U)$ is bounded. 
\end{proof}

We finally show the following result 
on the Mann iteration for metrically nonspreading mappings. 

\begin{theorem}\label{thm:Mann}
 Let $X$ be an Hadamard space, 
 $T$ a metrically nonspreading mapping of $X$ into itself,  
 $\{\alpha_n\}$ a sequence in $(0,1]$ 
 such that $\sum_{n=1}^{\infty}\alpha_n=\infty$, 
 and $\{x_n\}$ a sequence defined by 
 $x_1\in X$ and 
 \[
  x_{n+1} = (1-\alpha_n) x_n \oplus \alpha _n Tx_n 
 \]
 for all $n\in \N$. 
 Then the following hold. 
 \begin{enumerate}
  \item[(i)] $\Fix(T)$ is nonempty if and only if $\{x_n\}$ is bounded; 
  \item[(ii)] if $\Fix(T)$ is nonempty and $\inf_n\alpha_n(1-\alpha_n)>0$, 
then $\{x_n\}$ is $\Delta$-convergent to an element of $\Fix(T)$.  
 \end{enumerate}
\end{theorem}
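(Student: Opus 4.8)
The plan is to prove (i) first, then derive (ii) from the demiclosed principle together with an asymptotic-regularity-type estimate for the Mann iterates. For the ``only if'' direction of (i), suppose $u\in\Fix(T)$. Since $T$ is metrically nonspreading with a fixed point, it is quasinonexpansive, so $d(u,Tx_n)\le d(u,x_n)$. Combining this with the $\CAT(0)$ convexity inequality $d(u,x_{n+1})\le(1-\alpha_n)d(u,x_n)+\alpha_n d(u,Tx_n)\le d(u,x_n)$ shows that $\{d(u,x_n)\}$ is nonincreasing, hence $\{x_n\}$ is bounded. For the ``if'' direction, assume $\{x_n\}$ is bounded. By Lemma~\ref{lem:bdd_on_bdd}, $\{Tx_n\}$ is bounded as well; hence $\{d(x_n,Tx_n)\}$ is bounded. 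Using the stronger $\CAT(0)$ inequality $d(u,x_{n+1})^2\le(1-\alpha_n)d(u,x_n)^2+\alpha_n d(u,Tx_n)^2-\alpha_n(1-\alpha_n)d(x_n,Tx_n)^2$ is not yet available (we have no $u$), so instead I would fix $a\in X$ and test the metrically nonspreading inequality on $x_n$ and iterates to locate a candidate fixed point. Concretely, let $p$ be the asymptotic center of $\{x_n\}$, which is a singleton by Lemma~\ref{lem:AC}. The key step is to verify the two hypotheses of Lemma~\ref{lem:fpt}, namely $\limsup_n d(x_n,p)=\limsup_n d(Tx_n,p)$ and $\limsup_n d(x_n,Tp)=\limsup_n d(Tx_n,Tp)$. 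This requires first proving $d(x_n,Tx_n)\to 0$, which I expect to be the main obstacle: from $x_{n+1}=(1-\alpha_n)x_n\oplus\alpha_n Tx_n$ one gets $d(x_n,x_{n+1})=\alpha_n d(x_n,Tx_n)$, and one must show $d(x_n,Tx_n)\to 0$ using $\sum\alpha_n=\infty$ together with a suitable telescoping/monotonicity identity. Since we have no fixed point in hand during the ``if'' direction, I would instead apply a Fej\'er-type argument relative to the asymptotic center: set $r=\limsup_n d(x_n,p)$ and show, via Lemma~\ref{lem:equiv-nonsp} applied along the sequence, that $\limsup_n d(Tx_n,p)\le r$ and likewise for $Tp$, so that the hypotheses of Lemma~\ref{lem:fpt} hold and $p\in\Fix(T)$; this is where the bulk of the routine but delicate estimation lives.

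For part (ii), assume $\Fix(T)\ne\emptyset$ and $c:=\inf_n\alpha_n(1-\alpha_n)>0$. Pick $u\in\Fix(T)$. Now the strong $\CAT(0)$ inequality gives
\[
 d(u,x_{n+1})^2\le(1-\alpha_n)d(u,x_n)^2+\alpha_n d(u,Tx_n)^2-\alpha_n(1-\alpha_n)d(x_n,Tx_n)^2,
\]
and since $T$ is quasinonexpansive, $d(u,Tx_n)\le d(u,x_n)$, so
\[
 c\,d(x_n,Tx_n)^2\le\alpha_n(1-\alpha_n)d(x_n,Tx_n)^2\le d(u,x_n)^2-d(u,x_{n+1})^2.
\]
Because $\{d(u,x_n)^2\}$ is nonincreasing and bounded below, the right-hand side is summable, hence $d(x_n,Tx_n)\to 0$. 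Then for any $z\in\omega_\Delta(\{x_n\})$ we have $\AC(\{x_{n_i}\})=\{z\}$ along the relevant subsequence, and since $d(Tx_{n_i},x_{n_i})\to 0$, Theorem~\ref{thm:demiclosed} forces $z\in\Fix(T)$. Moreover $\{d(z,x_n)\}$ is nonincreasing (again by quasinonexpansiveness and $\CAT(0)$ convexity), hence convergent, for every such $z$. By Lemma~\ref{lem:KK-Delta}, $\{x_n\}$ is $\Delta$-convergent to some point $w$, and since $\omega_\Delta(\{x_n\})=\{w\}\subset\Fix(T)$, we conclude $w\in\Fix(T)$, completing the proof.

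I expect part (ii) to go through essentially mechanically once $d(x_n,Tx_n)\to0$ is in hand, so the real work is concentrated in the ``if'' direction of (i). The cleanest route may be to prove $d(x_n,Tx_n)\to0$ in the ``if'' direction as well: even without a known fixed point, one can use Lemma~\ref{lem:bdd_on_bdd} to bound $\{Tx_n\}$ and then exploit the metrically nonspreading inequality between $x_{n}$ and $x_{m}$ in the style of Lemma~\ref{lem:double-sequence}, but the Mann averaging complicates the two-index recursion, so a direct telescoping on $n\mapsto d(a,x_n)^2$ for a carefully chosen reference point $a$ (for instance $a=p$ after first establishing $p\in\Fix(T)$ via Lemma~\ref{lem:fpt} using only $\limsup$-level information) seems more robust. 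In short: reduce everything to checking the two $\limsup$ equalities of Lemma~\ref{lem:fpt}, and that reduction is the crux.
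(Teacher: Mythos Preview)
Your treatment of the ``only if'' direction of (i) and all of (ii) matches the paper's proof essentially line for line. The genuine gap is in the ``if'' direction of (i).

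Your plan there is to take the asymptotic center $p$ of $\{x_n\}$ and invoke Lemma~\ref{lem:fpt}. But the two $\limsup$ equalities in that lemma are precisely what you cannot obtain from boundedness alone. You correctly identify that they would follow from $d(x_n,Tx_n)\to 0$, but you never prove this, and in fact it need not hold under the sole hypothesis $\sum_n\alpha_n=\infty$ before a fixed point is known (think of $\alpha_n=1/n$: then $d(x_n,x_{n+1})\to 0$ says nothing about $d(x_n,Tx_n)$). Your fallback, applying Lemma~\ref{lem:equiv-nonsp} ``along the sequence'' to get $\limsup_n d(Tx_n,p)\le r$, does not close the gap either: the substitution in the proof of Lemma~\ref{lem:fpt} requires, in effect, $\limsup_n d(x_n,Tp)\le\limsup_n d(Tx_n,Tp)$ as well, and there is no reason to expect that inequality. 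So the reduction to Lemma~\ref{lem:fpt} stalls.

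The paper avoids this obstacle by a different device. Instead of the asymptotic center of $\{x_n\}$, it uses the unique minimizer $p$ (from Theorem~\ref{thm:unique_minimizer}) of the weighted functional
\[
 g(y)=\limsup_n \frac{1}{\omega_n}\sum_{k=1}^n \alpha_k\, d(y,Tx_k)^2,
 \qquad \omega_n=\sum_{l=1}^n\alpha_l.
\]
The point of the weights $\alpha_k$ is that the Mann recursion produces a telescoping term: combining the $\CAT(0)$ inequality $d(Tp,x_{k+1})^2\le(1-\alpha_k)d(Tp,x_k)^2+\alpha_k d(Tp,Tx_k)^2$ with the nonspreading inequality $2d(Tp,Tx_k)^2\le d(Tp,x_k)^2+d(Tx_k,p)^2$ yields
\[
 \alpha_k d(Tp,Tx_k)^2\le \alpha_k d(p,Tx_k)^2 + d(Tp,x_k)^2 - d(Tp,x_{k+1})^2.
\]
Summing, dividing by $\omega_n$, and using $\omega_n\to\infty$ gives $g(Tp)\le g(p)$, hence $Tp=p$. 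This argument never needs $d(x_n,Tx_n)\to 0$; the divergence $\sum_n\alpha_n=\infty$ enters only to kill the telescoped boundary term $d(Tp,x_1)^2/\omega_n$. That is the missing idea in your proposal.
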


\begin{proof}
 We first show the only if part of~(i). 
 Suppose that $\Fix(T)$ is nonempty 
 and fix $u\in \Fix(T)$. 
 Since $T$ is quasinonexpansive, we have 
 \begin{align*}
  \begin{split}
  d(u, x_{n+1}) 
 &= d\bigl(u, (1-\alpha_n) x_{n}\oplus \alpha_n Tx_n\bigr) \\
 &\leq (1-\alpha_n) d(u, x_{n}) + \alpha_nd(u, Tx_n) \leq d(u, x_{n})    
  \end{split}
 \end{align*}
 and hence $\{d(u, x_n)\}$ is convergent. 
 This implies that $\{x_n\}$ is bounded. 

 We next show the if part of~(i). 
 Suppose that $\{x_n\}$ is bounded 
 and set 
 \[
  \omega_n = \sum_{l=1}^{n} \alpha_l
 \]
 for all $n\in \N$. 
 Then it follows from Lemma~\ref{lem:bdd_on_bdd} 
 that $\{Tx_n\}$ is bounded. 
 Let $g$ be the real function on $X$ defined by 
 \[
  g(y) = \limsup_{n} 
 \frac{1}{\omega_n} 
 \sum_{k=1}^{n} \alpha_k d(y, Tx_k)^2 
 \]
 for all $y\in X$. 
 It then follows from Theorem~\ref{thm:unique_minimizer} that 
 $g$ has a unique minimizer $p\in X$. 
 By the definition of $\{x_n\}$, we have 
 \begin{align}\label{eq:thm:Mann-a}
 \begin{split}
 d(Tp, x_{k+1})^2 
 &= d\bigl(Tp, (1-\alpha_k) x_k \oplus \alpha_k Tx_k\bigr)^2  \\  
 &\leq (1-\alpha_k) d(Tp, x_k)^2 + \alpha_k d(Tp, Tx_k)^2  
 \end{split}
 \end{align}
 Since $T$ is metrically nonspreading, we have 
 \begin{align}\label{eq:thm:Mann-b}
  2d(Tp, Tx_k)^2 \leq d(Tp, x_k)^2 + d(Tx_k, p)^2. 
 \end{align}
 Using~\eqref{eq:thm:Mann-a} and~\eqref{eq:thm:Mann-b}, we have 
 \begin{align*}
 &\alpha_kd(Tp, Tx_k)^2 \\
 &\leq \alpha_k d(Tx_k, p)^2 
 + \alpha_k \bigl(d(Tp, x_k)^2 - d(Tp, Tx_k)^2\bigr) \\
 &= \alpha_k d(Tx_k, p)^2 
 + d(Tp, x_k)^2 -\bigl((1-\alpha_k)d(Tp, x_k)^2 + \alpha_k d(Tp,
  Tx_k)^2\bigr) \\
 &\leq \alpha_k d(Tx_k, p)^2 
 + d(Tp, x_k)^2 -d(Tp, x_{k+1})^2. 
 \end{align*}
 Consequently, we have 
 \begin{align*}
  \sum_{k=1}^{n} \alpha_k d(Tp, Tx_k)^2 
  & \leq \sum_{k=1}^{n} \alpha_k d(p, Tx_k)^2 
   +\bigl(d(Tp, x_1)^2 -d(Tp, x_{n+1})^2\bigr) \\
  & \leq \sum_{k=1}^{n} \alpha_k d(p, Tx_k)^2 
   +d(Tp, x_1)^2
 \end{align*}
 and hence 
 \begin{align*}
  \frac{1}{\omega_n}
 \sum_{k=1}^{n} \alpha_k d(Tp, Tx_k)^2 
   \leq \frac{1}{\omega_n}
 \sum_{k=1}^{n} \alpha_k d(p, Tx_k)^2 
  + \frac{1}{\omega_n} d(Tp, x_1)^2. 
 \end{align*}
 Since $\omega_n\to \infty$ as $n\to \infty$, 
 we obtain $g(Tp) \leq g(p)$. 
 Since $p$ is the unique minimizer of $g$, we conclude that $Tp=p$. 

 We finally show~(ii). 
 Suppose that $\Fix(T)$ is nonempty 
 and $\inf_n \alpha_n(1-\alpha_n) > 0$. 
 Fix $v\in \Fix(T)$. 
 Since $X$ is a $\CAT(0)$ space, we have 
 \begin{align*}
 \begin{split}
  d(v, x_{n+1})^2 
 &= d\bigl(v, (1-\alpha_n) x_n \oplus \alpha_n Tx_n\bigr)^2 \\
 &\leq (1-\alpha_n) d(v, x_n)^2 + \alpha_n d(v, Tx_n)^2 
 -\alpha_n (1-\alpha_n) d(x_n, Tx_n)^2 \\
 &\leq d(v, x_n)^2 
 -\alpha_n (1-\alpha_n) d(x_n, Tx_n)^2.   
 \end{split}
 \end{align*}
 This gives us that $\{d(v, x_{n})^2\}$ is convergent 
 and hence 
 \[
  d(x_n, Tx_n)^2 \leq \frac{1}{\inf_m \alpha_m (1-\alpha_m)} 
 \left(d(v, x_n)^2 - d(v, x_{n+1})^2\right) \to 0 
 \]
 as $n\to \infty$. Consequently, we obtain $d(x_n, Tx_n)\to 0$ 
 as $n\to \infty$. 
 If $z$ is an element of $\omega_{\Delta}\bigl(\{x_n\}\bigr)$, 
 then there exists a subsequence $\{x_{n_i}\}$ of $\{x_n\}$ 
 which is $\Delta$-convergent to $z$. 
 Since 
 \[
  \lim_{i}d(x_{n_i}, Tx_{n_i})= 0, 
 \]
 Lemma~\ref{thm:demiclosed} ensures that 
 $z$ is an element of $\Fix(T)$. 
 Hence $\omega_{\Delta}\bigl(\{x_n\}\bigr)$ 
 is a subset of $\Fix(T)$. 
 Thus, the sequence $\{d(z, x_n)\}$ is convergent 
 for each $z$ in $\omega_{\Delta}\bigl(\{x_n\}\bigr)$. 
 Then, Lemma~\ref{lem:KK-Delta} implies that 
 $\{x_n\}$ is $\Delta$-convergent to some $x_{\infty}\in X$. 
 Since 
 \[
  \{x_{\infty}\} 
  =\omega_{\Delta}\bigl(\{x_n\}\bigr) 
  \subset \Fix(T), 
 \]
 we conclude that $x_{\infty}$ is an element of $\Fix(T)$. 
\end{proof}

\section{Applications to monotone operators in Hadamard spaces}
\label{sec:app}

In this section, we obtain two corollaries of our results 
for the problem of finding zero points of monotone operators in Hadamard spaces. 

Before obtaining them, we first summarize the concepts of dual spaces and 
monotone operators in $\CAT(0)$ spaces. 
These concepts were introduced 
by Ahmadi Kakavandi and Amini~\cite{MR2680038}; 
see also Ahmadi Kakavandi~\cite{MR3003694} on related results. 

Let $X$ be a $\CAT(0)$ space and $\hat{L}(X)$ 
the real linear space of all Lipschitz continuous real functions on $X$. 
We denote by $\norm{\,\cdot \,}$ the Lipschitz seminorm on 
$\hat{L}(X)$ defined by 
\[
 \norm{f} = \sup\left\{\frac{\abs{f(p)-f(q)}}{d(p, q)}: p, q\in X, \,
 p\neq q\right\}
\]
for all $f\in \hat{L}(X)$. In other words, 
\[
 \norm{f} = \min \bigl\{\lambda \in [0,\infty): 
 \abs{f(p)-f(q)}\leq \lambda d(p,q) 
 \quad (\forall p, q\in X)\bigr\}
\]
 for all $f\in \hat{L}(X)$. 
 Then the following conditions 
 \[
  \norm{f}\geq 0; \quad 
  \norm{\alpha f} = \abs{\alpha} \norm{f}; \quad  
  \norm{f+g}\leq \norm{f} + \norm{g}
 \]
 hold for all $f,g\in \hat{L}(X)$ and $\alpha \in \R$. 
 
 We can define an equivalence relation $\sim$ on $\hat{L}(X)$ by 
 $f\sim g$ if $\norm{f-g}=0$. 
 It is clear that $f\sim g$ if and only if $f-g$ is a constant function. 
 We denote by $[f]$ the equivalence class of $f\in \hat{L}(X)$ 
 and let $L(X)$ be the space defined by 
 \[
  L(X)=\{[f]: f\in \hat{L}(X)\}. 
 \]
 The space $L(X)$ is a real Banach space under 
 the addition, the scalar multiplication, and the norm given by 
 \[
  [f] + [g] = [f+g], \quad \alpha [f] = [\alpha f], \quad \norm{[f]} = \norm{f}
 \]
 for all $[f], [g]\in L(X)$ and $\alpha \in \R$; 
 see~\cite{MR1442257}*{Proposition~2.4.1} for the proof of the metric completeness of $L(X)$. 

 We denote by $\alpha \overrightarrow{xy}$ 
 and $\overrightarrow{xy}$ the elements 
 $(\alpha, \overrightarrow{xy})$ and 
 $(1, \overrightarrow{xy})$ in $\R\times X^2$, respectively. 
 Then we define the mapping $\Phi$ 
 of $\R\times X^2$ into $\hat{L}(X)$ by 
 \[
  \Phi (\alpha \overrightarrow{xy})(p)
  = \alpha \ip{\overrightarrow{xy}}{\overrightarrow{xp}}
 \]
 for all $\alpha\overrightarrow{xy}\in \R\times X^2$ 
 and $p\in X$. It is easy to see that 
 \[
  \norm{\Phi(\alpha\overrightarrow{xy})} = \abs{\alpha}d(x,y). 
 \]
 We also define a real function $\hat{D}$ by  
 \begin{align}
 \begin{split}
  \hat{D}\bigl(\alpha \overrightarrow{xy}, \beta
  \overrightarrow{zw}\bigr) 
 &= \norm{\Phi(\alpha \overrightarrow{xy}) - 
 \Phi (\beta \overrightarrow{zw})} 
 \end{split}
 \end{align}
 for all 
 $\alpha \overrightarrow{xy}, \beta \overrightarrow{zw} \in \R\times X^2$, 
 which is a pseudometric on $\R \times X^2$, that is, 
 \begin{itemize}
  \item $\hat{D}(\alpha \overrightarrow{xy}, 
 \beta \overrightarrow{zw})\geq 0$ and $\hat{D}(\alpha \overrightarrow{xy}, 
 \alpha \overrightarrow{xy}) = 0$; 
  \item $\hat{D}(\alpha \overrightarrow{xy}, 
 \beta \overrightarrow{zw})
 =\hat{D}(\beta \overrightarrow{zw}, 
\alpha \overrightarrow{xy})$; 
  \item $\hat{D}(\alpha \overrightarrow{xy}, 
 \beta \overrightarrow{zw}) 
  \leq \hat{D}(\alpha \overrightarrow{xy}, 
 \gamma \overrightarrow{uv}) 
 + \hat{D}(\gamma \overrightarrow{uv}, 
 \beta \overrightarrow{zw})$
 \end{itemize}
 hold for all $\alpha \overrightarrow{xy}, \beta\overrightarrow{zw}, 
 \gamma \overrightarrow{uv} 
 \in \R \times X^2$. 
 We can define an equivalence relation $\sim$ on $\R \times X^2$ by 
 \[
  \alpha \overrightarrow{xy} \sim 
  \beta \overrightarrow{zw} 
  \Longleftrightarrow 
  \hat{D}(\alpha \overrightarrow{xy}, 
 \beta \overrightarrow{zw})= 0. 
 \]
 We then have the following equivalence;
 see also~\cite{MR2680038}*{Lemma~2.1}. 
 \begin{align}
 \begin{split}\label{eq:charact-equiv}
 &\alpha \overrightarrow{xy} \sim 
  \beta \overrightarrow{zw} \\
 &\Longleftrightarrow \Phi(\alpha \overrightarrow{xy}) \sim 
  \Phi(\beta \overrightarrow{zw}) \\
 &\Longleftrightarrow 
 \Phi(\alpha \overrightarrow{xy})(p) - \Phi(\beta 
  \overrightarrow{zw})(p) = 
 \Phi(\alpha \overrightarrow{xy})(q) - \Phi(\beta 
  \overrightarrow{zw})(q) 
\quad (\forall p, q\in X) \\
 &\Longleftrightarrow \alpha
  \ip{\overrightarrow{xy}}{\overrightarrow{xp}} 
-\alpha
  \ip{\overrightarrow{xy}}{\overrightarrow{xq}} 
= 
 \beta \ip{\overrightarrow{zw}}{\overrightarrow{zp}} 
-\beta \ip{\overrightarrow{zw}}{\overrightarrow{zq}} 
\quad (\forall p, q\in X) \\
 &\Longleftrightarrow \alpha \ip{\overrightarrow{xy}}{\overrightarrow{pq}} = 
 \beta \ip{\overrightarrow{zw}}{\overrightarrow{pq}} 
\quad (\forall \overrightarrow{pq} \in X^2). 
 \end{split}
 \end{align}
 The dual space $X^*$ of $X$ in the sense of 
 Ahmadi Kakavandi and Amini~\cite{MR2680038} 
 is the metric space given by 
 \[
  X^* = \{[\alpha \overrightarrow{xy}]: \alpha \overrightarrow{xy}
 \in \R\times X^2\}
 \]
 with the metric $D$ defined by 
 \[
  D\bigl([\alpha \overrightarrow{xy}], [\beta
  \overrightarrow{zw}]\bigr)
 = \hat{D}(\alpha \overrightarrow{xy}, \beta 
  \overrightarrow{zw}\bigr) 
 \]
 $[\alpha \overrightarrow{xy}], [\beta\overrightarrow{zw}] \in X^*$, 
 where $[\alpha \overrightarrow{xy}]$ denotes the equivalence class 
 of $\alpha \overrightarrow{xy} \in \R \times X^2$. 
 The origin $0$ of $X^*$ is given by 
 \[
  0=[\overrightarrow{aa}], 
 \] 
 where $a$ is a fixed element of $X$. It is obvious that 
 \[
  0= \{0 \overrightarrow{xy}: x, y\in X\} 
  \cup \{\alpha \overrightarrow{xx}: \alpha \in \R,\, x\in X\}. 
 \]

 It is known~\cite{MR2680038}*{pp.~3451--3452} that 
 if $X$ is a closed convex subset of a real Hilbert space $H$ 
 with a nonempty interior, 
 then $X$ is isometric to $X^*$. 
 In particular, if $X=H$, then the isometric bijection 
 $\tau$ of $H$ onto $H^*$ is given by 
 \[
  \tau (x) = [(1,0,x)]
 \]
 for all $x\in H$. 
 For each $u^*=[\alpha \overrightarrow{xy}]\in X^*$,  
 we define 
 \[
  \ip{u^*}{\overrightarrow{pq}}
  = \alpha \ip{\overrightarrow{xy}}{\overrightarrow{pq}}
 \]
 for all $\overrightarrow{pq}\in X^2$. 
 It follows from~\eqref{eq:charact-equiv} 
 that this is independent of the choice of $\alpha\overrightarrow{xy}$. 
 
 We next recall the concept of monotone operators in $\CAT(0)$ spaces. 
 Let $X$ be a $\CAT(0)$ space and $X^*$ the dual space of $X$. 
 Then an operator $A\colon X\to 2^{X^*}$ is said to be monotone if 
 \[
  \ip{u^*}{\overrightarrow{vu}} - \ip{v^*}{\overrightarrow{vu}} \geq 0
 \]
 whenever $u^*\in Au$ an $v^*\in Av$. 
 A monotone operator $A\colon X\to 2^{X^*}$ is said to 
 satisfy a range condition if 
 for each $x\in X$, 
 there exists $z\in X$ such that 
 \[
  [\overrightarrow{zx}] \in Az. 
 \]
 If $A\colon X\to 2^{X^*}$ is a monotone operator satisfying a range
 condition, then the resolvent $J_A$ of $A$ defined by 
 \[
  J_A (x) = \{z\in X: [\overrightarrow{zx}]\in Az\}
 \]
 for all $x\in X$ is a single-valued mapping of $X$ into itself. 
 The zero point set $A^{-1}(0)$ is defined by 
 \[
  A^{-1}(0) 
  =\bigl\{u\in X: 0 \in Au\bigr\}. 
 \]

 For a proper lower semicontinuous convex function 
 $f$ of a $\CAT(0)$ space $X$ into $(-\infty, \infty]$, 
 the subdifferential mapping $\partial f\colon X\to 2^{X^*}$ 
 of $f$ in the sense of Ahmadi Kakavandi and Amini~\cite{MR2680038}*{Definition~4.1} 
 is defined by 
 \[
  \partial f(x) = 
  \bigl\{u^*\in X^*: f(x) + \ip{u^*}{\overrightarrow{xy}} \leq f(y) 
  \quad (\forall y\in X)\bigr\}
 \]
 for all $x\in X$. 
 It is known~\cite{MR2680038}*{Theorem~4.2} that 
 if $X$ is an Hadamard space, then $\partial f\colon X\to 2^{X^*}$ is a monotone operator 
 satisfying a range condition 
 and 
 \[
  (\partial f)^{-1}(0) 
  = \{u\in X: f(u) = \inf f(X)\}. 
 \]
 It is also known~\cite{MR3679017}*{Proposition~5.3} that 
 the resolvent $J_{\partial f}$ of $\partial f$ 
 coincides with the proximity mapping $\Prox_f$ of $f$ 
 defined by~\eqref{eq:prox}. 

 We know the following fundamental result. 
 \begin{lemma}[\cite{MR3679017}*{Theorem~3.9}]
 \label{lem:mono-res}
 Let $X$ be a $\CAT(0)$ space, $A\colon X\to 2^{X^*}$  
 a monotone operator satisfying a range condition, 
 and $J_A$ the resolvent of $A$. 
 Then $J_A$ is a firmly metrically nonspreading 
 mapping of $X$ into itself such that 
 $\Fix(J_A)=A^{-1}(0)$. 
 \end{lemma}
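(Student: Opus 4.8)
The plan is to treat the two assertions in turn. I take for granted, as stated just before the lemma, that the range condition makes $J_A$ defined on all of $X$ and that monotonicity makes it single-valued, so that $J_A$ is genuinely a self-mapping of $X$; and I reduce firm metric nonspreadingness to the equivalent inequality~\eqref{eq:fn-metric} supplied by Lemma~\ref{lem:fund-fmn-mn}. To verify~\eqref{eq:fn-metric}, fix $x,y\in X$, put $u=J_Ax$ and $v=J_Ay$, and observe that by the definition of the resolvent $[\overrightarrow{ux}]\in Au$ and $[\overrightarrow{vy}]\in Av$. Feeding these into the defining inequality of a monotone operator and using $\ip{[\overrightarrow{ux}]}{\overrightarrow{vu}}=\ip{\overrightarrow{ux}}{\overrightarrow{vu}}$ (and the analogous identity for $[\overrightarrow{vy}]$), which is immediate from the definition $\ip{[\alpha\overrightarrow{xy}]}{\overrightarrow{pq}}=\alpha\ip{\overrightarrow{xy}}{\overrightarrow{pq}}$ with $\alpha=1$, one gets
\[
 \ip{\overrightarrow{ux}}{\overrightarrow{vu}} - \ip{\overrightarrow{vy}}{\overrightarrow{vu}} \geq 0.
\]

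The rest of the first assertion is a short computation with the elementary quasilinearization identities listed in Section~\ref{sec:pre}. Splitting $\overrightarrow{ux}$ at $v$ and using $\ip{\overrightarrow{uv}}{\overrightarrow{vu}}=-d(u,v)^2$ turns the first term into $-d(u,v)^2+\ip{\overrightarrow{vx}}{\overrightarrow{vu}}$; splitting $\overrightarrow{vx}$ at $y$ turns $\ip{\overrightarrow{vx}}{\overrightarrow{vu}}-\ip{\overrightarrow{vy}}{\overrightarrow{vu}}$ into $\ip{\overrightarrow{yx}}{\overrightarrow{vu}}$; and the antisymmetry rules give $\ip{\overrightarrow{yx}}{\overrightarrow{vu}}=\ip{\overrightarrow{uv}}{\overrightarrow{xy}}=\ip{\overrightarrow{(J_Ax)(J_Ay)}}{\overrightarrow{xy}}$. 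Substituting back yields
\[
 d(J_Ax, J_Ay)^2 \leq \ip{\overrightarrow{(J_Ax)(J_Ay)}}{\overrightarrow{xy}},
\]
which is exactly~\eqref{eq:fn-metric}, so Lemma~\ref{lem:fund-fmn-mn} tells us that $J_A$ is firmly metrically nonspreading. The same computation taken with $y=x$ and $u,v$ any two members of $J_A(x)$ forces $d(u,v)^2\leq\ip{\overrightarrow{uv}}{\overrightarrow{xx}}=0$, so it also reproves single-valuedness.

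For the equality $\Fix(J_A)=A^{-1}(0)$, the key remark is that $[\overrightarrow{uu}]$ is the origin $0$ of $X^*$ for every $u\in X$, as noted right after the definition of $X^*$. Hence, since $J_A$ is single-valued, $u\in\Fix(J_A)$ is equivalent to $u\in J_A(u)$, hence to $[\overrightarrow{uu}]\in Au$, hence to $0\in Au$, which is precisely $u\in A^{-1}(0)$; this yields both inclusions at once.

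I expect the only genuinely delicate point to be the bookkeeping in the second paragraph: choosing the correct split points and keeping track of signs through the antisymmetry relations for the quasilinearization. Everything else is immediate from the definitions of the dual space, the monotone operator, and the resolvent, together with the already-established Lemma~\ref{lem:fund-fmn-mn}. A minor sanity check worth doing first is that the pairing $\ip{u^*}{\overrightarrow{pq}}$ occurring in the monotonicity inequality coincides, on an element represented by $\overrightarrow{ux}$, with $\ip{\overrightarrow{ux}}{\overrightarrow{pq}}$; this is exactly the $\alpha=1$ case of the definition of that pairing.
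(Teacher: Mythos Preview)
Your proof is correct and follows essentially the same approach as the paper: both start from the monotonicity inequality applied to $[\overrightarrow{(J_Ax)x}]\in A(J_Ax)$ and $[\overrightarrow{(J_Ay)y}]\in A(J_Ay)$ and then unwind it via the quasilinearization. The only cosmetic difference is that the paper expands the two quasilinearization terms directly via~\eqref{eq:quasilinearization} to land on the defining inequality of a firmly metrically nonspreading mapping, whereas you manipulate them with the splitting and antisymmetry identities to reach~\eqref{eq:fn-metric} and then invoke Lemma~\ref{lem:fund-fmn-mn}; the $\Fix(J_A)=A^{-1}(0)$ argument is identical.
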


For the sake of completeness, we give the proof. 

\begin{proof}
 Put $T=J_A$. 
 The definition of $T$ gives us that 
 \[
  u=Tu
  \Longleftrightarrow 
  0=[\overrightarrow{uu}] \in Au 
 \] 
 and hence $\Fix(T)=A^{-1}(0)$. 

 We next show that $T$ is 
 firmly metrically nonspreading. 
 Let $x,y\in X$ be given. By the definition of $T$, 
 we have 
 \[
  \left[\overrightarrow{(Tx)x}\right]\in A(Tx) 
  \quad \textrm{and} \quad 
  \left[\overrightarrow{(Ty)y}\right]\in A(Ty). 
 \]
 The monotonicity of $A$ implies that 
 \[
  \ip{\left[\overrightarrow{(Tx)x}\right]}
  {\overrightarrow{(Ty)(Tx)}}
  -
  \ip{\left[\overrightarrow{(Ty)y}\right]}
  {\overrightarrow{(Ty)(Tx)}}
  \geq 0 
 \]
 and hence 
 \[
  \ip{\overrightarrow{(Tx)x}}
  {\overrightarrow{(Ty)(Tx)}}
  -
  \ip{\overrightarrow{(Ty)y}}
  {\overrightarrow{(Ty)(Tx)}}
  \geq 0.  
 \]
 By the definition of quasilinearization, we have 
 \[
  d(x,Ty)^2 - d(Tx,Ty)^2 - d(x,Tx)^2
  -
  \left(
  d(Ty,Tx)^2 + d(y,Ty)^2 - d(y,Tx)^2
  \right)
  \geq 0
 \]
 and hence 
 \begin{align*}
  d(Tx, y)^2 + d(Ty, x)^2 - d(Tx, x)^2 - d(Ty, y)^2 
  \geq 2d(Tx, Ty)^2. 
 \end{align*}
 Therefore, the mapping $T$ is firmly metrically nonspreading. 
\end{proof}

Using Theorems~\ref{thm:fpt},~\ref{thm:conv}, 
and Lemma~\ref{lem:mono-res}, 
we obtain the following corollary. 
The part~(ii) also follows from a more general result 
in~\cite{MR3679017}*{Theorem~4.3}. 

\begin{corollary}\label{cor:mono-fpt}
 Let $X$ be an Hadamard space, 
 $A\colon X\to 2^{X^*}$ a monotone operator 
 satisfying a range condition, and $J_A$ the resolvent of $A$. 
 Then the following hold. 
 \begin{enumerate}
  \item[(i)] The set $A^{-1}(0)$ is nonempty if and only if 
 $\{(J_A)^nx\}$ is bounded for some $x\in X$; 
  \item[(ii)] if $A^{-1}(0)$ is nonempty, 
 then $\{(J_A)^nx\}$ is $\Delta$-convergent to 
 an element of $A^{-1}(0)$ for all $x\in X$. 
 \end{enumerate}
\end{corollary}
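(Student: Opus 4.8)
The plan is to deduce both parts directly from the structural result Lemma~\ref{lem:mono-res} together with the fixed point theorems already established in Section~\ref{sec:Fixed}. Write $T = J_A$. Lemma~\ref{lem:mono-res} tells us two things at once: that $T$ is a firmly metrically nonspreading self-mapping of $X$, and that $\Fix(T) = A^{-1}(0)$. Since every firmly metrically nonspreading mapping is metrically nonspreading, $T$ is in particular a metrically nonspreading mapping of the Hadamard space $X$ into itself, so all the results of Section~\ref{sec:Fixed} apply to it.

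For part~(i), I would invoke Theorem~\ref{thm:fpt} with this $T$: it states that $\Fix(T)$ is nonempty if and only if $\{T^n x\}$ is bounded for some $x \in X$. Rewriting $\Fix(T)$ as $A^{-1}(0)$ and $T^n x$ as $(J_A)^n x$ gives precisely the stated equivalence.

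For part~(ii), assume $A^{-1}(0) = \Fix(T)$ is nonempty. Then Theorem~\ref{thm:conv} applies directly and yields that $\{T^n x\} = \{(J_A)^n x\}$ is $\Delta$-convergent to an element of $\Fix(T) = A^{-1}(0)$, for every $x \in X$, which is exactly the assertion.

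Since the corollary is a straightforward specialization, there is no real obstacle; the only points requiring care are the identification $\Fix(J_A) = A^{-1}(0)$, which is supplied by Lemma~\ref{lem:mono-res}, and the observation that ``firmly metrically nonspreading'' implies ``metrically nonspreading'', so that Theorems~\ref{thm:fpt} and~\ref{thm:conv} are genuinely applicable. One could alternatively phrase part~(ii) by combining the asymptotic regularity of $J_A$ from Lemma~\ref{lem:asymp_firm} with the demiclosed principle Theorem~\ref{thm:demiclosed} and Lemmas~\ref{lem:AC},~\ref{lem:KK-Delta}, but appealing to Theorem~\ref{thm:conv} is the cleanest route.
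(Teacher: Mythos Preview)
Your proposal is correct and matches the paper's own approach exactly: the paper simply states that the corollary follows from Theorems~\ref{thm:fpt} and~\ref{thm:conv} together with Lemma~\ref{lem:mono-res}, which is precisely what you do.
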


Using Theorem~\ref{thm:Mann} and Lemma~\ref{lem:mono-res}, 
we obtain the following corollary. 

\begin{corollary}\label{cor:mono-Mann}
 Let $X$ be an Hadamard space, 
 $A\colon X\to 2^{X^*}$ a monotone operator 
 satisfying a range condition, 
 $J_A$ the resolvent of $A$, 
 $\{\alpha_n\}$ a sequence in $(0,1]$ 
 such that $\sum_{n=1}^{\infty} \alpha_n =\infty$, 
 and $\{x_n\}$ a sequence defined by 
 $x_1\in X$ and 
 \[
  x_{n+1} = (1-\alpha_n) x_n \oplus \alpha _n J_Ax_n 
 \]
 for all $n\in \N$. 
 Then the following conditions hold. 
 \begin{enumerate}
  \item[(i)] The set $A^{-1}(0)$ is nonempty if and only if $\{x_n\}$ is bounded; 
  \item[(ii)] if $A^{-1}(0)$ is nonempty and 
$\inf_n\alpha_n(1-\alpha_n)>0$, 
then $\{x_n\}$ is $\Delta$-convergent to an element of
	      $A^{-1}(0)$.   
 \end{enumerate}
\end{corollary}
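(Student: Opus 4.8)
The plan is to deduce this corollary directly from Theorem~\ref{thm:Mann} by recognizing the resolvent $J_A$ as a metrically nonspreading self mapping on $X$ and the sequence $\{x_n\}$ as the corresponding Mann iteration. First I would invoke Lemma~\ref{lem:mono-res}: since $A\colon X\to 2^{X^*}$ is a monotone operator satisfying a range condition, the resolvent $J_A$ is a well-defined (single-valued) firmly metrically nonspreading mapping of $X$ into itself with $\Fix(J_A)=A^{-1}(0)$. Because every firmly metrically nonspreading mapping is metrically nonspreading, the mapping $T=J_A$ is a metrically nonspreading self mapping on the Hadamard space $X$.

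Next I would note that the sequence $\{x_n\}$ in the statement is exactly the Mann iteration for $T$: we have $x_1\in X$ and $x_{n+1}=(1-\alpha_n)x_n\oplus\alpha_n Tx_n$ for all $n\in\N$, with $\{\alpha_n\}$ a sequence in $(0,1]$ satisfying $\sum_{n=1}^{\infty}\alpha_n=\infty$. Hence the hypotheses of Theorem~\ref{thm:Mann} are met with this choice of $T$, and its conclusions transfer to our setting via the identification $\Fix(T)=A^{-1}(0)$.

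For part~(i), Theorem~\ref{thm:Mann}~(i) gives that $\Fix(T)$ is nonempty if and only if $\{x_n\}$ is bounded; substituting $\Fix(T)=A^{-1}(0)$ yields the asserted equivalence. For part~(ii), assume in addition that $A^{-1}(0)=\Fix(T)$ is nonempty and $\inf_n\alpha_n(1-\alpha_n)>0$; then Theorem~\ref{thm:Mann}~(ii) shows that $\{x_n\}$ is $\Delta$-convergent to some point of $\Fix(T)=A^{-1}(0)$, which is the claim.

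I do not expect a genuine obstacle: the argument is a direct translation of Theorem~\ref{thm:Mann} through the dictionary $T=J_A$, $\Fix(T)=A^{-1}(0)$. The only point that warrants a moment's care is that Theorem~\ref{thm:Mann} is phrased for metrically nonspreading mappings, whereas Lemma~\ref{lem:mono-res} supplies the stronger property that $J_A$ is firmly metrically nonspreading; the passage from the latter to the former is immediate from the definitions, so no additional work is needed.
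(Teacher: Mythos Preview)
Your proposal is correct and matches the paper's approach exactly: the paper simply states that the corollary follows from Theorem~\ref{thm:Mann} and Lemma~\ref{lem:mono-res}, which is precisely the dictionary $T=J_A$, $\Fix(T)=A^{-1}(0)$ you spell out. No additional ideas are needed.
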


\section*{Acknowledgment}
This work was supported by JSPS KAKENHI Grant Number 17K05372. 

\begin{bibdiv}
 \begin{biblist}

\bib{MR3003694}{article}{
   author={Ahmadi Kakavandi, Bijan},
   title={Weak topologies in complete $CAT(0)$ metric spaces},
   journal={Proc. Amer. Math. Soc.},
   volume={141},
   date={2013},
   pages={1029--1039},
}

\bib{MR2680038}{article}{
   author={Ahmadi Kakavandi, Bijan},
   author={Amini, Massoud},
   title={Duality and subdifferential for convex functions on complete ${\rm CAT}(0)$ metric spaces},
   journal={Nonlinear Anal.},
   volume={73},
   date={2010},
   pages={3450--3455},
}

\bib{MR1386667}{article}{
   author={Alber, Y. I.},
   title={Metric and generalized projection operators in Banach spaces:
   properties and applications},
   conference={
      title={Theory and applications of nonlinear operators of accretive and
      monotone type},
   },
   book={
      series={Lecture Notes in Pure and Appl. Math.},
      volume={178},
      publisher={Dekker, New York},
   },
   date={1996},
   pages={15--50},
}

\bib{MR1274188}{article}{
   author={Alber, Y. I.},
   author={Reich, S.},
   title={An iterative method for solving a class of nonlinear operator
   equations in Banach spaces},
   journal={Panamer. Math. J.},
   volume={4},
   date={1994},
   pages={39--54},
}

\bib{MR2422998}{article}{
   author={Aoyama, Koji},
   author={Kimura, Yasunori},
   author={Takahashi, Wataru},
   title={Maximal monotone operators and maximal monotone functions for
   equilibrium problems},
   journal={J. Convex Anal.},
   volume={15},
   date={2008},
   pages={395--409},
}

\bib{MR2810735}{article}{
   author={Aoyama, Koji},
   author={Kohsaka, Fumiaki},
   title={Fixed point theorem for $\alpha$-nonexpansive mappings in Banach
   spaces},
   journal={Nonlinear Anal.},
   volume={74},
   date={2011},
   pages={4387--4391},
}

\bib{MR3206460}{article}{
   author={Ariza-Ruiz, David},
   author={Leu\c{s}tean, Lauren\c{t}iu},
   author={L\'{o}pez-Acedo, Genaro},
   title={Firmly nonexpansive mappings in classes of geodesic spaces},
   journal={Trans. Amer. Math. Soc.},
   volume={366},
   date={2014},
   pages={4299--4322},
}

\bib{MR3047087}{article}{
   author={Ba{\v{c}}{\'a}k, Miroslav},
   title={The proximal point algorithm in metric spaces},
   journal={Israel J. Math.},
   volume={194},
   date={2013},
   pages={689--701},
}

\bib{MR3241330}{book}{
   author={Ba{\v{c}}{\'a}k, Miroslav},
   title={Convex analysis and optimization in Hadamard spaces},
   publisher={De Gruyter, Berlin},
   date={2014},
}

\bib{MR2390077}{article}{
   author={Berg, I. D.},
   author={Nikolaev, I. G.}, 
   title={Quasilinearization and curvature of Aleksandrov space},
   journal={Geom. Dedicata},
   volume={133},
   date={2008},
   pages={195--218},
}

\bib{MR1744486}{book}{
   author={Bridson, Martin R.},
   author={Haefliger, Andr{\'e}},
   title={Metric spaces of non-positive curvature},
   publisher={Springer-Verlag, Berlin},
   date={1999},
}

\bib{MR1835418}{book}{
   author={Burago, Dmitri},
   author={Burago, Yuri},
   author={Ivanov, Sergei},
   title={A course in metric geometry},
   publisher={American Mathematical Society, Providence, RI},
   date={2001},
}

\bib{MR3691338}{article}{
   author={Chaipunya, Parin},
   author={Kumam, Poom},
   title={On the proximal point method in Hadamard spaces},
   journal={Optimization},
   volume={66},
   date={2017},
   pages={1647--1665},
}

\bib{MR3396547}{article}{
   author={Cholamjiak, Prasit},
   title={The modified proximal point algorithm in $\rm CAT(0)$ spaces},
   journal={Optim. Lett.},
   volume={9},
   date={2015},
   pages={1401--1410},
}

\bib{MR2232680}{article}{
   author={Dhompongsa, S.},
   author={Kirk, W. A.},
   author={Sims, Brailey},
   title={Fixed points of uniformly Lipschitzian mappings},
   journal={Nonlinear Anal.},
   volume={65},
   date={2006},
   pages={762--772},
}

\bib{MR1360608}{article}{
   author={Jost, J{\"u}rgen},
   title={Convex functionals and generalized harmonic maps into spaces of
   nonpositive curvature},
   journal={Comment. Math. Helv.},
   volume={70},
   date={1995},
   pages={659--673},
}

\bib{MR2144037}{article}{
   author={Kamimura, Shoji},
   title={The proximal point algorithm in a Banach space},
   conference={
      title={Nonlinear analysis and convex analysis},
   },
   book={
      publisher={Yokohama Publishers, Yokohama},
   },
   date={2004},
   pages={143--148},
}

\bib{MR2112848}{article}{
   author={Kamimura, Shoji},
   author={Kohsaka, Fumiaki},
   author={Takahashi, Wataru},
   title={Weak and strong convergence theorems for maximal monotone
   operators in a Banach space},
   journal={Set-Valued Anal.},
   volume={12},
   date={2004},
   pages={417--429},
}

\bib{MR1972223}{article}{
   author={Kamimura, Shoji},
   author={Takahashi, Wataru},
   title={Strong convergence of a proximal-type algorithm in a Banach space},
   journal={SIAM J. Optim.},
   volume={13},
   date={2002},
   pages={938--945},
}

\bib{MR3679017}{article}{
   author={Khatibzadeh, Hadi},
   author={Ranjbar, Sajad},
   title={Monotone operators and the proximal point algorithm in
  complete ${\rm CAT}(0)$ metric spaces}, 
   journal={J. Aust. Math. Soc.},
   volume={103},
   date={2017},
   pages={70--90},
}

\bib{MR3574140}{article}{
   author={Kimura, Yasunori},
   author={Kohsaka, Fumiaki},
   title={Two modified proximal point algorithms for convex functions in
  Hadamard spaces}, 
   journal={Linear Nonlinear Anal.},
   volume={2},
   date={2016},
   pages={69--86},
}

\bib{MR3213144}{article}{
   author={Kimura, Yasunori},
   author={Saejung, Satit},
   author={Yotkaew, Pongsakorn},
   title={The Mann algorithm in a complete geodesic space with curvature
   bounded above},
   journal={Fixed Point Theory Appl.},
   date={2013},
   pages={2013:336, 1--13},
}

\bib{MR2416076}{article}{
   author={Kirk, W. A.},
   author={Panyanak, B.},
   title={A concept of convergence in geodesic spaces},
   journal={Nonlinear Anal.},
   volume={68},
   date={2008},
   pages={3689--3696},
}

\bib{MR3289781}{article}{
   author={Kohsaka, Fumiaki},
   title={Averaged sequences for nonspreading mappings in Banach spaces},
   conference={
      title={Banach and function spaces IV (ISBFS 2012)},
   },
   book={
      publisher={Yokohama Publishers, Yokohama},
   },
   date={2014},
   pages={313--323},
}

\bib{MR3777000}{article}{
   author={Kohsaka, Fumiaki},
   title={Existence and approximation of fixed points of vicinal mappings in geodesic spaces},
   journal={Pure Appl. Funct. Anal.},
   volume={3},
   date={2018},
   pages={91--106},
}

\bib{MR2058504}{article}{
   author={Kohsaka, Fumiaki},
   author={Takahashi, Wataru},
   title={Strong convergence of an iterative sequence for maximal monotone
   operators in a Banach space},
   journal={Abstr. Appl. Anal.},
   date={2004},
   pages={239--249},
   issn={1085-3375},
}

\bib{MR2430800}{article}{
   author={Kohsaka, Fumiaki},
   author={Takahashi, Wataru},
   title={Fixed point theorems for a class of nonlinear mappings related to maximal monotone operators in Banach spaces},
   journal={Arch. Math. (Basel)},
   volume={91},
   date={2008},
   pages={166--177},
}

\bib{MR2448915}{article}{
   author={Kohsaka, Fumiaki},
   author={Takahashi, Wataru},
   title={Existence and approximation of fixed points of firmly nonexpansive-type mappings in Banach spaces},
   journal={SIAM J. Optim.},
   volume={19},
   date={2008},
   pages={824--835},
}

\bib{MR423139}{article}{
   author={Lim, Teck Cheong},
   title={Remarks on some fixed point theorems},
   journal={Proc. Amer. Math. Soc.},
   volume={60},
   date={1976},
   pages={179--182},
}

\bib{MR0054846}{article}{
   author={Mann, W. Robert},
   title={Mean value methods in iteration},
   journal={Proc. Amer. Math. Soc.},
   volume={4},
   date={1953},
   pages={506--510},
}

\bib{MR1651416}{article}{
   author={Mayer, Uwe F.},
   title={Gradient flows on nonpositively curved metric spaces and harmonic
   maps},
   journal={Comm. Anal. Geom.},
   volume={6},
   date={1998},
   pages={199--253},
}

\bib{MR3037927}{article}{
   author={Naraghirad, Eskandar},
   author={Wong, Ngai-Ching},
   author={Yao, Jen-Chih},
   title={Approximating fixed points of $\alpha$-nonexpansive mappings
  in uniformly convex Banach spaces and ${\rm CAT}(0)$ spaces},
   journal={Fixed Point Theory Appl.},
   date={2013},
   pages={2013:57, 20 pp.},
}

\bib{MR1442257}{book}{
   author={Pallaschke, Diethard},
   author={Rolewicz, Stefan}, 
   title={Foundations of mathematical optimization. Convex analysis without linearity},
   publisher={Kluwer Academic Publishers Group, Dordrecht},
   date={1997},
}

\bib{MR0193549}{article}{
   author={Rockafellar, R. T.},
   title={Characterization of the subdifferentials of convex functions},
   journal={Pacific J. Math.},
   volume={17},
   date={1966},
   pages={497--510},
}

\bib{MR0262827}{article}{
   author={Rockafellar, R. T.},
   title={On the maximal monotonicity of subdifferential mappings},
   journal={Pacific J. Math.},
   volume={33},
   date={1970},
   pages={209--216},
}

\bib{MR0282272}{article}{
   author={Rockafellar, R. T.},
   title={On the maximality of sums of nonlinear monotone operators},
   journal={Trans. Amer. Math. Soc.},
   volume={149},
   date={1970},
   pages={75--88},
}

\bib{MR0285942}{article}{
   author={Rockafellar, R. T.},
   title={Monotone operators associated with saddle-functions and minimax
   problems},
   conference={
      title={Nonlinear Functional Analysis},
      address={Proc. Sympos. Pure Math., Vol. XVIII, Part 1, Chicago, Ill.},
      date={1968},
   },
   book={
      publisher={Amer. Math. Soc., Providence, R.I.},
   },
   date={1970},
   pages={241--250},
}

\bib{MR3213164}{article}{
   author={Suzuki, Tomonari},
   title={Fixed point theorems for a new nonlinear mapping similar to a
   nonspreading mapping},
   journal={Fixed Point Theory Appl.},
   date={2014},
   pages={2014:47, 13 pp},
}

 \end{biblist}
\end{bibdiv}

\end{document}